\documentclass[a4paper,11pt]{article}
\usepackage[T1]{fontenc}
\usepackage{lmodern}
\usepackage[english]{babel}
\usepackage{amsmath,amssymb,amsthm,mathtools}
\usepackage{microtype}
\usepackage{needspace}
\usepackage[a4paper,textwidth=160mm,textheight=235mm,top=27mm,headheight=14pt]{geometry}
\usepackage{fancyhdr}
\usepackage[hidelinks]{hyperref}
\allowdisplaybreaks[3]
\newtheorem{theorem}{Theorem}[section]
\newtheorem{proposition}[theorem]{Proposition}
\newtheorem{lemma}[theorem]{Lemma}
\newtheorem{corollary}[theorem]{Corollary}
\theoremstyle{remark}
\newtheorem{remark}[theorem]{Remark}
\newtheorem{example}[theorem]{Example}
\numberwithin{equation}{section}
\newcommand{\R}{\mathbb R}
\newcommand{\C}{\mathbb C}
\newcommand{\D}{\mathcal D}
\newcommand{\Q}{\mathcal Q}
\newcommand{\Row}{\mathcal R}
\newcommand{\rank}{\operatorname{rank}}
\newcommand{\diag}{\operatorname{diag}}
\newcommand{\spanof}{\operatorname{span}}

\newcommand{\controlcost}{\mathfrak N_\infty}
\newcommand{\ind}{\mathbf 1}
\newcommand{\norm}[1]{\left\lVert#1\right\rVert}
\newcommand{\step}[1]{\par\medskip\Needspace{4\baselineskip}\noindent\emph{#1}\par\smallskip\noindent}
\title{Sharp Observability Costs for Strongly Coupled Parabolic Systems
with Vanishing Principal Coupling\thanks{This work was partially supported by the National Natural Science Foundation of China under Grant No.~12601883.}}
\author{Xiaomin Zhu\thanks{Civil Aviation Flight University of China, Deyang 618307, China (xiaominzhu123@yeah.net).}}
\date{}

\begin{document}
\maketitle
\begin{abstract}
We study the loss of observability for a class of strongly coupled parabolic
systems as the coupling parameter tends to zero. The system is observed
through a fixed matrix $B$ on a measurable subset of space--time with
positive measure. Under the Kalman rank condition, we determine the exact blow-up rates of the optimal observability cost. Different components of the state, as well as different combinations of them, may have different blow-up rates, and these rates are determined by the algebraic structure of the pair $(B,K)$. The full-state observability
cost is determined by the largest of these rates. We also show that the
worst-case minimum $L^\infty$-norm of null controls has the same
asymptotic behavior.
\end{abstract}

\noindent\textbf{Keywords.} Strongly coupled parabolic systems, observability cost, Kalman condition, measurable sets, null controllability.

\smallskip
\noindent\textbf{MSC codes.} 35K40, 93B07, 93C20, 49J20.

\section{Introduction}\label{sec:introduction}

Let $\Omega\subset\R^d$ be a bounded, connected, Lipschitz and locally
star-shaped domain. We use local star-shapedness in the sense of
\cite[Definition~4]{ApraizEscauriazaWangZhang2014}.
Let $T>0$ and $a>0$. Fix integers $n\ge2$ and $1\le m\le n$, and matrices
\[
K\in\R^{n\times n},\qquad K^\top=-K,\qquad
B\in\R^{m\times n},\qquad \rank B=m.
\]
For $\varepsilon\in\R$, consider
\begin{equation}\label{eq:original-system}
\begin{cases}
\partial_t y_\varepsilon=(aI_n+\varepsilon K)\Delta y_\varepsilon
   &\text{in }\Omega\times(0,T),\\
y_\varepsilon=0&\text{on }\partial\Omega\times(0,T),\\
y_\varepsilon(0)=y_0&\text{in }\Omega.
\end{cases}
\end{equation}
The symmetric part of the diffusion matrix is $aI_n$. Thus the system
is uniformly parabolic. For each $y_0\in L^2(\Omega;\R^n)$, it has a
unique mild solution in $C([0,T];L^2(\Omega;\R^n))$, denoted by
$y_\varepsilon(\cdot;y_0)$.
Throughout the paper, $|\cdot|$ is the Euclidean norm and
$\norm{\cdot}_2$ is the $L^2(\Omega)$ norm, with the component dimension
specified by the argument. We use $\norm{\cdot}$ for the induced norm of a matrix or a bounded operator.

Let $\D\subset\Omega\times(0,T)$ be Lebesgue measurable, with $|\D|>0$.
The measured quantity is $By_\varepsilon$ on $\D$. For $\varepsilon\ne0$, let $C_{\rm full}(\varepsilon;T,\D)$ be the
least constant in
\begin{equation}\label{eq:full-cost}
\norm{y_\varepsilon(T;y_0)}_2
\le C_{\rm full}(\varepsilon;T,\D)\int_\D |By_\varepsilon(x,t;y_0)|\,dx\,dt
\qquad\text{for all }y_0\in L^2(\Omega;\R^n).
\end{equation}
For a nonzero row $\ell\in\R^{1\times n}$, define
$C_\ell(\varepsilon;T,\D)$ in the same way by
\begin{equation}\label{eq:functional-cost}
\norm{\ell y_\varepsilon(T;y_0)}_2
\le C_\ell(\varepsilon;T,\D)\int_\D |By_\varepsilon(x,t;y_0)|\,dx\,dt
\qquad\text{for all }y_0\in L^2(\Omega;\R^n).
\end{equation}
Here $\ell y_\varepsilon(T)$ is a scalar-valued function on $\Omega$.
We set the cost equal to $+\infty$ when there is no finite admissible
constant. Our aim is to determine the orders of these costs as
$\varepsilon\to0$. When $T$ and $\D$ are fixed, we omit them from the
cost notation.

At $\varepsilon=0$, every component of \eqref{eq:original-system}
satisfies the scalar heat equation independently. When $m<n$, choose
$v\in\ker B\setminus\{0\}$ and a nonzero $\phi\in L^2(\Omega)$.
The initial state $y_0(x)=\phi(x)v$ gives
\[
y(x,t)=\bigl(e^{at\Delta}\phi\bigr)(x)v,
\qquad By(x,t)=0.
\]
The state remains nonzero at every finite positive time. Thus full-state observability fails at zero coupling when $m<n$. For every $\varepsilon$, the symmetric part of the diffusion matrix is $aI_n$. As $\varepsilon\to0$, the coupling term $\varepsilon K\Delta y_\varepsilon$, which transfers information between different component directions, vanishes. This leads to a quantitative question: at what rate does the full-state observability cost blow up as the coupling tends to zero?

The complex heat equation observed through its real part gives a simple illustration. Write
$z_\varepsilon=y_1+iy_2$ in
\[
\partial_t z_\varepsilon=(a+i\varepsilon)\Delta z_\varepsilon,
\qquad z_\varepsilon|_{\partial\Omega}=0.
\]
Then its real and imaginary parts satisfy \eqref{eq:original-system} with
$K=\begin{pmatrix}0&-1\\1&0\end{pmatrix}$ and $B=(1,0)$.
Let $-\Delta\phi_1=\lambda_1\phi_1$ be the first Dirichlet eigenpair,
with $\norm{\phi_1}_2=1$. For $z_0=i\phi_1$, the solution is
\begin{equation}\label{eq:intro-complex-mode}
z_\varepsilon(x,t)
=e^{-a\lambda_1t}
\bigl(\sin(\varepsilon\lambda_1t)+i\cos(\varepsilon\lambda_1t)\bigr)
\phi_1(x).
\end{equation}
Its terminal norm is
$
\|z_\varepsilon(T)\|_2=e^{-a\lambda_1T}.
$
On the other hand, since only the real part is observed,
\[
\operatorname{Re}z_\varepsilon(x,t)
=e^{-a\lambda_1t}\sin(\varepsilon\lambda_1t)\phi_1(x).
\]
Using $|\sin s|\le |s|$, we obtain
\[
\int_{\D}|\operatorname{Re}z_\varepsilon(x,t)|\,dx\,dt
\le
|\varepsilon|\lambda_1
\int_{\D}t e^{-a\lambda_1t}|\phi_1(x)|\,dx\,dt
\le C_{\D}|\varepsilon|,
\]
where $C_{\D}>0$ is independent of $\varepsilon$. Therefore, if
\[
\|z_\varepsilon(T)\|_2
\le
C_{\mathrm{Re}}(\varepsilon;T,\D)
\int_{\D}|\operatorname{Re}z_\varepsilon|\,dx\,dt
\]
holds for all initial data, then the choice $z_0=i\phi_1$ gives
\[
e^{-a\lambda_1T}
\le
C_{\D}C_{\mathrm{Re}}(\varepsilon;T,\D)|\varepsilon|.
\]
Thus the full-state observability cost is at least of order
$|\varepsilon|^{-1}$ as $\varepsilon\to0$.
This simple example suggests a more general question. For systems with
several components, information may be transferred to the observed
components through the coupling structure. It is therefore natural to ask how the observability cost is determined by the algebraic structure of $(B,K)$ and how it behaves as the coupling
parameter tends to zero.

This question is closely related to algebraic conditions for the control
and observation of coupled parabolic systems.
Ammar-Khodja et al. \cite{AmmarKhodja2005} proved sufficient conditions
for null controllability of parabolic systems by one control force.
A Kalman rank condition for localized distributed control with a
diagonalizable diffusion matrix was proved in
\cite{AmmarKhodja2009}. Lissy and Zuazua \cite{LissyZuazua2019}
characterized internal observability for systems with constant strongly
elliptic diffusion matrices through a rank condition for each spatial
mode. For $aI_n+\varepsilon K$ and fixed $\varepsilon\ne0$, this
condition reduces to the Kalman condition for $(B,K)$.

The algebraic structure also appears in quantitative estimates of control
and observability costs. For finite-dimensional linear systems,
Seidman \cite{Seidman1988} obtained sharp small-time estimates for the
worst-case minimum $L^2$ norm of controls, with the exponent determined
by the Kalman structure. In a PDE setting, Lasiecka and Seidman
\cite{LasieckaSeidman2006} studied observability costs for a thermoelastic
system when one component is observed. Their estimates include the
regime in which the coupling between the thermal and elastic variables
tends to zero. These results indicate that quantitative cost estimates may depend on both the algebraic structure of the system and the size of the coupling parameter.

We also consider observation on measurable subsets of space--time, which has been studied in several settings. Phung and Wang \cite{PhungWang2013} considered parabolic equations with observation on a fixed spatial subset and a measurable time set. Apraiz et al. \cite{ApraizEscauriazaWangZhang2014} proved observability for the heat equation from arbitrary measurable subsets of space--time with positive measure, while Wang and Zhang \cite{WangZhang2017} obtained related results for abstract evolution equations. For the two-component system, Fu et al. \cite{FuWangYuZhu2026} proved single-component $L^1$ observability on measurable subsets of space--time for fixed nonzero coupling.

The main contribution of this paper is a quantitative description of the observability cost as the coupling parameter $\varepsilon$ tends to zero. We show that different components of the state, or different linear combinations of them, may have different blow-up rates. These rates are determined by the algebraic structure of $B$ and $K$. In particular, the full-state observability cost is determined by the largest blow-up rate. All these estimates are sharp and remain valid for observation on any measurable subset of space--time with positive measure.

Moreover, we study the associated null control problem. The worst-case minimum $L^\infty$-norm of null controls has the same blow-up rate as the full-state observability cost. In particular, for the complex heat equation observed through its real part, the optimal observability cost is of order $|\varepsilon|^{-1}$ as $\varepsilon\to0$.

The rest of the paper is organized as follows.
Section~\ref{sec:modal} introduces the algebraic structure associated with
$B$ and $K$ and states the main observability estimates.
Section~\ref{sec:measurable} establishes observability on measurable
subsets of space--time.
Section~\ref{sec:sharpness} proves the sharp bounds for the observability
costs and gives several examples.
Section~\ref{sec:control} studies the corresponding minimum-norm null
control problem.

\section{Kalman rank condition and a uniform modal estimate}\label{sec:modal}

In this section, we describe the algebraic structure that determines the
blow-up rate of the observability cost. The main point is that different
state directions may reach the observed components after different numbers
of couplings through $K$. These different levels lead to different powers
of $\varepsilon$ as $\varepsilon\to0$. The main result of this section is a uniform estimate for each spatial eigenmode. It will be used in Section~\ref{sec:measurable} to obtain observability from the measurable set $\D$.

We first explain how the coupling matrix $K$ enters the observation and why its algebraic structure affects the observability cost as $\varepsilon\to0$. For this purpose, consider a single Dirichlet eigenfunction. Let
$
-\Delta\phi=\lambda\phi
$
and take initial data of the form $y_0(x)=\phi(x)v$, where
$v\in\R^n$. The corresponding solution has the form
$$
y_\varepsilon(x,t)=\phi(x)z_\varepsilon(t),
$$
where
$$
z_\varepsilon'(t)
=-\lambda(aI_n+\varepsilon K)z_\varepsilon(t)\quad \text{and} \quad z_\varepsilon(0)=v.
$$
Hence
$$
Bz_\varepsilon(t)
=
e^{-a\lambda t}Be^{-\varepsilon\lambda tK}v.
$$

Expanding the matrix exponential gives
$$
Bz_\varepsilon(t)
=
e^{-a\lambda t}
\sum_{r=0}^{\infty}
\frac{(-\varepsilon\lambda t)^r}{r!}BK^rv.
$$
Therefore, the observation involves not only $B$, but also the successive
matrices
$
BK,\ BK^2,\ \ldots,
$
and the contribution of $BK^r$ is accompanied by the factor
$\varepsilon^r$.

The expansion above suggests grouping the state directions according to the number of couplings needed to reach the observation. For $r\ge0$, set
\begin{equation}\label{eq:row-filtration}
\Row_{-1}=\{0\},\qquad
\Row_r=\spanof\{\text{rows of }B,BK,\ldots,BK^r\}.
\end{equation}
These spaces form an increasing sequence
$
\Row_{-1}\subset\Row_0\subset\Row_1\subset\cdots.
$
The Kalman rank condition is
\begin{equation}\label{eq:kalman-rank}
\rank\begin{pmatrix}B\\BK\\\vdots\\BK^{n-1}\end{pmatrix}=n.
\end{equation}
The condition \eqref{eq:kalman-rank} describes the algebraic accessibility of the state directions through $B$ and $K$. Its role in observability from the measurable set $\D$ will be established in Section~\ref{sec:measurable}. When \eqref{eq:kalman-rank} holds, define
\begin{equation}\label{eq:depths}
q=\min\{r:\Row_r=\R^{1\times n}\},\qquad
\rho(\ell)=\min\{r:\ell\in\Row_r\}\quad(\ell\ne0).
\end{equation}
Here, \(\rho(\ell)\) is the first Kalman level containing the direction \(\ell\), while \(q\) is the largest depth needed to recover the whole state space. Under the Kalman rank condition, the inclusions are strict until the whole space is reached; hence $q\le n-m$. In particular,
$
0\le\rho(\ell)\le q\le n-m\le n-1,
$
and $\rho(\ell)=0$ exactly when $\ell$ is a linear combination of the
rows of $B$.

The next theorem shows that these depths give the exact powers of the
observability costs defined in \eqref{eq:full-cost} and
\eqref{eq:functional-cost}.
\begin{theorem}\label{thm:sharp}
If \eqref{eq:kalman-rank} holds, then for every nonzero row $\ell$, there are
constants $c_\ell,M_\ell,\varepsilon_\ell>0$ such that
\begin{equation}\label{eq:functional-sharp}
c_\ell|\varepsilon|^{-\rho(\ell)}
\le C_\ell(\varepsilon;T,\D)
\le M_\ell|\varepsilon|^{-\rho(\ell)},
\qquad 0<|\varepsilon|\le\varepsilon_\ell.
\end{equation}
There are also constants $c,C,\varepsilon_*>0$ such that
\begin{equation}\label{eq:full-sharp}
c|\varepsilon|^{-q}
\le C_{\rm full}(\varepsilon;T,\D)
\le C|\varepsilon|^{-q},
\qquad 0<|\varepsilon|\le\varepsilon_*.
\end{equation}
Moreover, for every fixed $\varepsilon\ne0$, full-state observability holds if and only if \eqref{eq:kalman-rank} is satisfied. If the condition fails then $C_{\rm full}(\varepsilon;T,\D)=+\infty$ for every $\varepsilon\ne0$.
\end{theorem}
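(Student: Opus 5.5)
Our plan is to prove the upper and lower bounds separately, with the upper bounds resting on a uniform modal estimate followed by a transfer to the measurable set $\D$.

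\emph{Lower bounds.} We test with a single eigenmode, as in the complex heat example. Take the first eigenpair $(\lambda_1,\phi_1)$ and $y_0=\phi_1 v$. Then
\[
By_\varepsilon=\phi_1e^{-a\lambda_1t}Be^{-\varepsilon\lambda_1tK}v .
\]
Choose $v$ with $v\perp\Row_{\rho(\ell)-1}^\top$ (rows transposed) and $\ell v\ne0$; this is possible because $\ell\notin\Row_{\rho(\ell)-1}$. Then $BK^rv=0$ for $r<\rho(\ell)$, so the Taylor remainder gives
\[
|Be^{-\varepsilon\lambda_1tK}v|\le C(\lambda_1T|\varepsilon|)^{\rho(\ell)}
\]
uniformly for $t\le T$. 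Hence $\int_\D|By_\varepsilon|\le C|\varepsilon|^{\rho(\ell)}$. On the other hand, $\ell e^{-\varepsilon\lambda_1TK}v\to\ell v\ne0$, so $\|\ell y_\varepsilon(T)\|_2\ge c$ for small $|\varepsilon|$. This gives $C_\ell\ge c_\ell|\varepsilon|^{-\rho(\ell)}$. The full-state lower bound follows by taking $\ell$ with $\rho(\ell)=q$, since $C_{\rm full}\ge C_\ell$.

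For the ``only if'' part at fixed $\varepsilon\neq0$: if the Kalman rank condition fails, pick $v\ne0$ orthogonal to all rows of $BK^r$. Cayley–Hamilton gives $Be^{sK}v=0$ for all $s$. Thus $By_\varepsilon\equiv0$, while $y_\varepsilon(T)=e^{-a\lambda_1T}\phi_1 e^{-\varepsilon\lambda_1TK}v\ne0$ because $e^{-\varepsilon\lambda_1TK}$ is orthogonal. Hence $C_{\rm full}=+\infty$.

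\emph{Upper bounds: modal estimate.} Choose a basis adapted to the filtration: rows $b_1,\dots,b_n$ taken from $B,BK,\dots,BK^q$, where each $b_j=e_{i_j}^\top BK^{r_j}$ has level $r_j$. Write $\ell=\sum_j\alpha_jb_j$ with only levels $r_j\le\rho(\ell)$ present.

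For each eigenvalue $\lambda$ we need a uniform estimate of the form
\begin{equation*}
|b_jz|\le C\,(|\varepsilon|\lambda)^{-r_j}\sup_{s\in I}\bigl|Be^{-s\varepsilon\lambda K}z\bigr|,
\end{equation*}
which comes from differentiating $s\mapsto Be^{-s\varepsilon\lambda K}z$ a total of $r_j$ times. The derivatives are controlled on an interval by Markov/Remez-type inequalities for exponential polynomials. Since $K$ is skew, the frequencies are purely imaginary with bounded spectrum, so the Turán lemma applies with constants depending only on $n$ and the length of the interval measured in the variable $\varepsilon\lambda t$. The factor $\lambda^{-r_j}$, together with the decay $e^{-a\lambda t}$, only helps. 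The uniformity in $\varepsilon$ also needs care, because for small $\varepsilon\lambda T$ the interval is short; there we use the Taylor expansion directly, so that the coefficients $BK^rz$ are recovered with weight $(\varepsilon\lambda)^{-r}$.

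\emph{Upper bounds: passage to $\D$.} We then pass to the measurable set $\D$ via the propagation-of-smallness/telescoping method of Apraiz–Escauriaza–Wang–Zhang, presumably done in Section~\ref{sec:measurable}. Decompose into low and high frequencies $\lambda\le N$. The spectral inequality on measurable sets and the interpolation estimates absorb factors $e^{C\sqrt N}$ into the decay $e^{-a\lambda T}$. The only $\varepsilon$-dependence is the algebraic factor $|\varepsilon|^{-\rho(\ell)}$ from the modal estimate, which yields $C_\ell\le M_\ell|\varepsilon|^{-\rho(\ell)}$. The full-state bound follows from $|z|\le C\max_j|b_jz|$, giving $C_{\rm full}\le C|\varepsilon|^{-q}$; taking $\varepsilon$ fixed gives the ``if'' part.

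\emph{Main obstacle.} We expect the hardest step to be the uniform modal inversion combined with the measurable-set machinery. The exponential-polynomial estimate must have constants independent of both $\varepsilon$ and $\lambda$ across the regimes $\varepsilon\lambda T\ll1$ and $\varepsilon\lambda T\gtrsim1$. The approach we would try first is to rescale time by $\varepsilon\lambda$ and use the Turán–Nazarov inequality, whose constant depends only on the number of exponentials and on the spectral width $\|K\|$ times the interval length.
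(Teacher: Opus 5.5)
Your lower bounds and your necessity argument are correct and essentially match the paper. The lower bounds use the first-mode test data $\phi_1v$ with $v$ annihilated by $\Row_{\rho(\ell)-1}$ and $\ell v\ne0$, together with $C_{\rm full}\ge C_\ell/|\ell|$.

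The gap is in the upper bound, at the sentence ``the only $\varepsilon$-dependence is the algebraic factor $|\varepsilon|^{-\rho(\ell)}$ from the modal estimate''. The passage to $\D$ is not linear in the modal constant.
\begin{itemize}
\item Suppose the low-frequency modal estimate carries a factor $K_\varepsilon=|\varepsilon|^{-q}$ in the unweighted norm. That factor also multiplies the high-frequency contamination of the observation, so the interpolation inequality takes the form $H_j\le K_\varepsilon e^{A/\delta_j}(X_jH_{j+2})^{1/2}$. The Young step in the telescoping sum then produces $K_\varepsilon^2X_j$. The straightforward combination therefore yields $|\varepsilon|^{-2q}$, and nothing in your sketch recovers the sharp exponent.
\item For a direction $\ell$ with $\rho(\ell)<q$ the scheme does not even close. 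The right-hand side of the interpolation inequality contains the full norm at the earlier time, so $\norm{\ell y_\varepsilon(t)}_2$ cannot be telescoped. Recovering each $b_jz$ separately with cost $|\varepsilon|^{-r_j}$ does not survive the iteration.
\item Your displayed modal estimate is false when $|\varepsilon|\lambda$ is large. For $K=J$, $B=(1,0)$, $z=(0,1)^\top$ one has $|BKz|=1$, while $Be^{-s\varepsilon\lambda K}z=\sin(s\varepsilon\lambda)$. The estimate would then force $1\le C(|\varepsilon|\lambda)^{-1}$. The correct form is $|\varepsilon|^{r_j}|b_jz|\le C\bigl(1+(\lambda|I|)^{-r_j}\bigr)\sup_I|\cdots|$ for $|\varepsilon|\le\varepsilon_0$.
\end{itemize}

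That corrected form points to the missing idea. Measure the state throughout in the weighted norm $|D_\varepsilon Pv|\simeq\bigl(\sum_{r\le q}|\varepsilon|^{2r}|BK^rv|^2\bigr)^{1/2}$. Equivalently, work with $w_\varepsilon=D_\varepsilon Py_\varepsilon$. It solves $\partial_tw_\varepsilon=(aI_n+N_\varepsilon)\Delta w_\varepsilon$ with $N_\varepsilon=D_\varepsilon(\varepsilon PKP^{-1})D_\varepsilon^{-1}$, which is polynomial in $\varepsilon$ and nilpotent at $\varepsilon=0$, and its observation is $Ew_\varepsilon=By_\varepsilon$.

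Two ingredients are then needed that your sketch does not supply.
\begin{itemize}
\item A uniform bound $\norm{e^{-tN_\varepsilon}}\le C(1+t^{n-1})$ for $|\varepsilon|\le\varepsilon_0$. This is not automatic, because the orthogonality of $e^{-\varepsilon tK}$ is lost in the weighted norm; the paper proves it via a Schur factorization. It supplies the high-frequency decay and the uniform semigroup bounds used in the interpolation and telescoping steps.
\item The modal estimate in this norm with an $\varepsilon$-independent constant. The paper gets it from a Gramian of $(E,N_\eta)$, uniform for $|\eta|\le\varepsilon_0$, the scaling $sD_sN_\varepsilon D_s^{-1}=N_{s\varepsilon}$ on short intervals, and then Tur\'an--Nazarov.
\end{itemize}
With these, the whole measurable-set argument gives Theorem~\ref{thm:weighted} with a constant independent of $\varepsilon$. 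The factor $|\varepsilon|^{-\rho(\ell)}$ (resp.\ $|\varepsilon|^{-q}$) then enters exactly once, at time $T$, through $|\varepsilon|^{\rho(\ell)}\norm{\ell y_\varepsilon(T)}_2\le C\norm{w_\varepsilon(T)}_2$.
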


\begin{remark}
Theorem~\ref{thm:sharp} shows that the Kalman depth determines the exact
blow-up order of the observability cost as $\varepsilon\to0$. For a fixed
row $\ell$, the cost of recovering $\ell y_\varepsilon(T)$ is of order
$|\varepsilon|^{-\rho(\ell)}$, while the full-state cost is of order
$|\varepsilon|^{-q}$.

Thus different state directions may have different observability costs,
according to their Kalman depths. Moreover, for every fixed
$\varepsilon\ne0$, full-state observability holds if and only if
\eqref{eq:kalman-rank} is satisfied. Here and below,
$f(\varepsilon)\asymp g(\varepsilon)$ means two-sided bounds by positive
constants for all sufficiently small nonzero $\varepsilon$.
\end{remark}

The proof of Theorem~\ref{thm:sharp} is completed in Section~\ref{sec:sharpness}.
The upper bounds in Theorem~\ref{thm:sharp} follow from the following weighted observability estimate.
\begin{theorem}\label{thm:weighted}
Assume \eqref{eq:kalman-rank}. For every $\varepsilon_0>0$, there is
$C=C(\Omega,a,T,\D,K,B,\varepsilon_0)>0$ such that
\begin{equation}\label{eq:weighted-observability}
\left(\sum_{r=0}^q |\varepsilon|^{2r}
\norm{BK^r y_\varepsilon(T;y_0)}_2^2\right)^{1/2}
\le C\int_\D |By_\varepsilon(x,t;y_0)|\,dx\,dt
\end{equation}
for every $y_0\in L^2(\Omega;\R^n)$ and
$|\varepsilon|\le\varepsilon_0$. At $\varepsilon=0$, the term with
$r=0$ has weight one and all other terms have weight zero.
\end{theorem}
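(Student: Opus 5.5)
Fix $\varepsilon_0>0$. The plan has three stages: a uniform per-eigenmode estimate, a spectral inequality for $L^1$ observation on measurable sets, and the telescoping/propagation-of-smallness argument of Apraiz--Escauriaza--Wang--Zhang \cite{ApraizEscauriazaWangZhang2014} run uniformly in $\varepsilon$. The matrix $aI_n+\varepsilon K$ is normal, with symmetric part $aI_n$, and it commutes with the Dirichlet Laplacian acting componentwise. Hence $y_\varepsilon(t)=e^{at\Delta}e^{\varepsilon tK\Delta}y_0$, and $e^{\varepsilon tK\Delta}$ is unitary on $L^2(\Omega;\R^n)$ because $K$ is skew. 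In particular $\norm{y_\varepsilon(t)}_2=\norm{e^{at\Delta}y_0}_2$, so the dissipation is independent of $\varepsilon$.

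\emph{Step 1: uniform modal estimate.} For the eigenmode of $\lambda$ we have $Bz_\varepsilon(t)=e^{-a\lambda t}Be^{-\varepsilon\lambda tK}v$. Set $s=\varepsilon\lambda$ and define the weighted norm
\[
N_s(w)^2=\sum_{r\le q}|s|^{2r}|BK^rw|^2.
\]
The first aim is a finite-dimensional inequality of the form
\[
N_{s}(w)\le C_I\,\sup_{t\in I}|Be^{-s t K}w|,
\]
uniform in $s$ for bounded $|s|\,|I|$. To get it, expand $Be^{-stK}w$ as a polynomial in $t$ up to degree $q$ plus a remainder, and use Markov/Remez-type bounds to recover each coefficient $s^rBK^rw/r!$. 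The Kalman structure enters here: by the Cayley--Hamilton theorem, the higher rows $BK^{r}$ with $r>q$ lie in $\Row_q$ and can be absorbed. For large $|s|$ we instead use compactness: the exponential $e^{-stK}$ is almost periodic and the Kalman condition holds, so the inequality holds with the weights comparable after rescaling. I would try to prove the uniform bound by scaling $t\mapsto t/|s|$ and arguing by contradiction over the compact set of normalized data.

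\emph{Step 2: spectral inequality on measurable sets.} For $E_\Lambda=\spanof\{\phi_j:\lambda_j\le\Lambda\}$ I would combine the Lebeau--Robbiano/analyticity spectral inequality on measurable sets from \cite{ApraizEscauriazaWangZhang2014} with Step 1. At a time slice $\omega_t=\{x:(x,t)\in\D\}$, low-frequency data $w\in E_\Lambda\otimes\R^n$ should satisfy
\[
\Bigl(\sum_j N_{\varepsilon\lambda_j}(w_j)^2\Bigr)^{1/2}\le e^{C\sqrt\Lambda}\int_{\D\cap\{t\in J\}}|By_\varepsilon|.
\]
This is obtained by treating $By_\varepsilon$ as analytic in time and space with bounds independent of $\varepsilon$ for $|\varepsilon|\le\varepsilon_0$; this independence is what makes it uniform. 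On the time interval $J$, a Remez inequality in $t$ on measurable subsets of $J$ then transfers the modal estimate from Step 1 to the set $\D$.

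\emph{Step 3: telescoping.} Choose a density point of $\D$ and a decreasing sequence of times, and run the telescoping argument of \cite{PhungWang2013,ApraizEscauriazaWangZhang2014}. The high-frequency part decays like $e^{-a\Lambda(t_{k}-t_{k+1})}$ by the $\varepsilon$-independent dissipation, which absorbs the $e^{C\sqrt\Lambda}$ cost and yields an $L^1$ estimate for the weighted quantity at time $T$. The weights $|\varepsilon|^r$ arise because $|\varepsilon\lambda|^r\ge|\varepsilon|^r\lambda_1^r$ and the norm commutes with the flow. The step I expect to be the main obstacle is Step 1: obtaining a modal constant that is uniform simultaneously for $\varepsilon\lambda\to0$ (degenerate Kalman weights) and $\varepsilon\lambda\to\infty$, with at most polynomial growth in $\lambda$ so that the constant can be absorbed into $e^{C\sqrt\Lambda}$. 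At $\varepsilon=0$ only the $r=0$ term survives, and the estimate reduces to the scalar heat equation result applied to $By$.
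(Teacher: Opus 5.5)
There is a genuine gap. Your architecture is the paper's: a per-mode estimate in time, the spatial spectral inequality on good time slices, then telescoping. But the proposal does not supply what makes the theorem nontrivial, namely uniformity as $\varepsilon\to0$. Where it does invoke uniformity, it relies on a false statement.

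\textbf{Step~1 is left open.} Your Taylor/Markov argument is in effect a hand-made rescaling, and it works only when $|\varepsilon|\lambda\delta\ll1$, where $\delta$ is the length of the time window. The other regimes, and how they match up, are only announced. The missing idea is the Kalman-adapted change of variables $w_\varepsilon=D_\varepsilon Py_\varepsilon$, with rows $p_i=b_{\alpha_i}K^{d_i}$ and $D_\varepsilon=\diag(\varepsilon^{d_1},\ldots,\varepsilon^{d_n})$.
\begin{itemize}
\item The new generator $N_\varepsilon=D_\varepsilon(\varepsilon A)D_\varepsilon^{-1}$ is polynomial in $\varepsilon$, with nilpotent limit $N_0$.
\item The identity $(EN_\varepsilon^{d_i}v)_{\alpha_i}=v_i$ holds for every $\varepsilon$, including $\varepsilon=0$.
\item Hence the unit-interval Gramian is positive definite uniformly on the compact set $|\eta|\le\varepsilon_0$, and there is no degenerate endpoint to treat separately.
\item Long windows reduce to the last unit subinterval.
\item Short windows reduce to the unit interval via $sD_sN_\varepsilon D_s^{-1}=N_{s\varepsilon}$, at the cost of a factor $(\lambda\delta)^{-q}$. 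This factor is harmless after multiplication by $\delta^{n-1}$, because $q\le n-1$ and $\lambda\ge\lambda_1$.
\end{itemize}
Passing to measurable time sets is a second problem. A polynomial Remez inequality is not uniform here: $Be^{-\varepsilon\lambda tK}w$ is an exponential sum with frequencies up to $\varepsilon_0\Lambda$, and $\Lambda\to\infty$ in the interpolation step. The paper instead uses the Tur\'an--Nazarov inequality. Its constant depends only on the ratio $|I|/|G|$ and on the number of exponentials, not on the frequencies.

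\textbf{The weighted norm does not commute with the flow.} The orthogonal flow $e^{-\varepsilon\lambda tK}$ preserves $|w|$ but not $\sum_r|\varepsilon|^{2r}|BK^rw|^2$. For the complex heat equation, $|y_1|^2+\varepsilon^2|y_2|^2$ is not conserved. Indeed, the matrix in \eqref{eq:complex-normalized-exponential}, stripped of $e^{-a\lambda t}$, has norm of order $1+\lambda t$ when $|\varepsilon|\lambda t$ is small.

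Yet the telescoping must run in the weighted norm, because the quantity on the left of one interpolation step must reappear on the right of the next. In the unweighted norm the interpolation constant necessarily blows up as $\varepsilon\to0$ when $q\ge1$. To see this, test it on $\phi_1v$ with $v$ annihilated by $\Row_{q-1}$. So you need, in the weighted norm and uniformly in $\varepsilon$:
\begin{itemize}
\item the bound $\norm{S_\varepsilon(t)}\le M$;
\item the high-frequency decay $\norm{S_\varepsilon(t)(I-\Pi_\Lambda)}\le Me^{-c_0\Lambda t}$;
\item propagation from the time where a mode is recovered to the end of its window;
\item propagation from $t_1$ to $T$.
\end{itemize}
Your $\varepsilon$-independent dissipation and analyticity bounds give these only for the unweighted norm. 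In the paper they follow from $\norm{e^{-tN_\varepsilon}}\le C(1+t^{n-1})$ for $|\varepsilon|\le\varepsilon_0$. That bound is proved by a Schur factorization together with a Volterra series that terminates after $n-1$ terms. The polynomial growth is genuine at $\varepsilon=0$, where $N_0$ is nilpotent, and only the heat factor $e^{-a\lambda t}$ absorbs it.

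Likewise, your mode-dependent weights $|\varepsilon\lambda_j|^r$ must be collapsed to the fixed weights $|\varepsilon|^r$ on each window before telescoping. This costs a factor $\delta^{-q}$, which $e^{C/\delta}$ absorbs.

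These gaps can be filled along your lines, but doing so essentially reproduces Lemma~\ref{lem:matrices} and Proposition~\ref{prop:modal}.
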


Section~\ref{sec:measurable} proves Theorem~\ref{thm:weighted}. We first
introduce coordinates adapted to the Kalman depths and establish the
uniform modal estimate needed there.

Write $b_1,\ldots,b_m$ for the rows of $B$. Starting from these rows,
extend them successively to bases of
$$
\Row_0,\Row_1,\ldots,\Row_q=\R^{1\times n}
$$
by selecting rows from $BK^r$ at level $r$. Denote the selected rows by
\begin{equation}\label{eq:adapted-basis}
p_i=b_{\alpha_i}K^{d_i}, \quad \alpha_i\in \{1,\ldots,m\}
\end{equation}
where $(\alpha_i,d_i)=(i,0) \ (1\le i\le m)$ and
$0=d_1=\cdots=d_m\le\cdots\le d_n=q$.
For every $r$, the rows with $d_i\le r$ form a basis of $\Row_r$.
Hence
$
\rho(p_i)=d_i.
$
Let $P$ be the matrix with rows $p_1,\ldots,p_n$. Thus the coordinates
defined by $P$ are arranged according to their Kalman depths.

We next attach to each coordinate the power of $\varepsilon$
corresponding to its depth. Set
\begin{equation}\label{eq:weighted-variables}
D_\varepsilon
=\diag(\varepsilon^{d_1},\ldots,\varepsilon^{d_n}),
\qquad
w_\varepsilon=D_\varepsilon Py_\varepsilon,
\qquad
E=\begin{pmatrix}I_m&0\end{pmatrix},
\end{equation}
where $\varepsilon^0=1$ also at $\varepsilon=0$. Since the first $m$
rows of $P$ are the rows of $B$ and have depth zero,
\begin{equation}\label{eq:weighted-output}
Ew_\varepsilon=By_\varepsilon,
\qquad
\norm{w_\varepsilon(T)}_2^2
=
\sum_{i=1}^n
|\varepsilon|^{2d_i}
\norm{p_i y_\varepsilon(T)}_2^2.
\end{equation}

The norm of $w_\varepsilon$ is equivalent to the weighted quantity in
Theorem~\ref{thm:weighted}. More precisely,
\begin{equation}\label{eq:weight-comparison}
|D_\varepsilon Pv|
\le
\left(
\sum_{r=0}^q
|\varepsilon|^{2r}|BK^rv|^2
\right)^{1/2}
\le
C|D_\varepsilon Pv|,
\qquad
|\varepsilon|\le\varepsilon_0,\quad v\in\R^n.
\end{equation}
Indeed, each $p_i$ is one of the rows appearing in the middle term.
Conversely, every row $b_kK^r$ belongs to $\Row_r$, so
$$
b_kK^r
=
\sum_{d_i\le r}c_{ki}^{(r)}p_i.
$$
Therefore,
$$
\varepsilon^r b_kK^rv
=
\sum_{d_i\le r}
c_{ki}^{(r)}
\varepsilon^{r-d_i}
(D_\varepsilon Pv)_i.
$$
Since $d_i\le r$, the factors $\varepsilon^{r-d_i}$ remain uniformly
bounded for $|\varepsilon|\le\varepsilon_0$. This proves
\eqref{eq:weight-comparison}.

Thus, the weighted quantity in Theorem~\ref{thm:weighted} is equivalent to the standard norm of $w_\varepsilon$, and it remains to prove a uniform observability estimate for $w_\varepsilon$.

 For $\varepsilon\ne0$, set
\begin{equation}\label{eq:normalized-matrix}
A=PKP^{-1},
\qquad
N_\varepsilon
=
D_\varepsilon(\varepsilon A)D_\varepsilon^{-1}.
\end{equation}
Then $w_\varepsilon$ satisfies
\begin{equation}\label{eq:normalized-system}
\partial_t w_\varepsilon
=
(aI_n+N_\varepsilon)\Delta w_\varepsilon,
\qquad
w_\varepsilon|_{\partial\Omega}=0.
\end{equation}
The weighted coordinates $w_\varepsilon$ are chosen so that the transformed matrix $N_\varepsilon$ remains
regular as $\varepsilon\to0$. The following lemma gives the properties
of $N_\varepsilon$ needed in the uniform modal estimate.
\begin{lemma}\label{lem:matrices}
The entries of $N_\varepsilon$ extend polynomially to all real
$\varepsilon$, with
\begin{equation}\label{eq:matrix-entries}
(N_\varepsilon)_{ij}=
\begin{cases}
A_{ij}\varepsilon^{1+d_i-d_j},& d_j\le d_i+1,\\
0, & d_j>d_i+1.
\end{cases}
\end{equation}
For every $v\in\R^n$ and $\varepsilon\in\R$,
\begin{equation}\label{eq:coordinate-recovery}
(EN_\varepsilon^{d_i}v)_{\alpha_i}=v_i,
\qquad i=1,\ldots,n.
\end{equation}
At $\varepsilon=0$,
\begin{equation}\label{eq:nilpotent-limit}
N_0^{q+1}=0,\qquad N_0^q\ne0.
\end{equation}
For every $s>0$,
\begin{equation}\label{eq:time-scaling}
sD_sN_\varepsilon D_s^{-1}=N_{s\varepsilon}.
\end{equation}
Finally, for every $\varepsilon_0>0$, there is $C>0$ such that
\begin{equation}\label{eq:matrix-decay}
\norm{e^{-tN_\varepsilon}}
\le C(1+t^{n-1}),
\qquad
\norm{e^{-t(aI_n+N_\varepsilon)}}
\le Ce^{-at/2}
\end{equation}
for all $t\ge0$ and $|\varepsilon|\le\varepsilon_0$.
\end{lemma}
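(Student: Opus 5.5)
The plan is to verify each assertion by direct matrix algebra in the adapted basis $P$. The polynomial extension and the uniform decay will come from two facts: the block structure of $A=PKP^{-1}$ with respect to the depths $d_i$, and the orthogonality of $e^{-\tau K}$.

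\emph{Entries and polynomiality.} For $\varepsilon\ne0$ we have $(N_\varepsilon)_{ij}=\varepsilon^{d_i}\,\varepsilon A_{ij}\,\varepsilon^{-d_j}=A_{ij}\varepsilon^{1+d_i-d_j}$. The $i$-th row of $AP=PK$ is $p_iK=b_{\alpha_i}K^{d_i+1}\in\Row_{d_i+1}$. The rows $p_j$ with $d_j\le d_i+1$ form a basis of $\Row_{d_i+1}$, so the unique expansion of $p_iK$ in the basis $\{p_j\}$ has $A_{ij}=0$ whenever $d_j>d_i+1$. Every surviving exponent therefore satisfies $1+d_i-d_j\ge0$, which gives \eqref{eq:matrix-entries} and the polynomial extension to $\varepsilon=0$.

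\emph{Coordinate recovery, nilpotency and time scaling.} For $\varepsilon\ne0$ we have $N_\varepsilon^{d}=\varepsilon^dD_\varepsilon A^dD_\varepsilon^{-1}$. Because $\alpha_i\le m$, the $\alpha_i$-th row of $E$ is $e_{\alpha_i}^\top$, and $e_{\alpha_i}^\top P=b_{\alpha_i}$. Hence
$$e_{\alpha_i}^\top A^{d_i}=b_{\alpha_i}K^{d_i}P^{-1}=p_iP^{-1}=e_i^\top.$$
Since $d_{\alpha_i}=0$, this yields $(EN_\varepsilon^{d_i}v)_{\alpha_i}=\varepsilon^{d_i}\varepsilon^{-d_i}v_i=v_i$. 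Both sides are polynomial in $\varepsilon$, so the identity also holds at $\varepsilon=0$.

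At $\varepsilon=0$ only the entries with $d_j=d_i+1$ survive. Thus $N_0$ maps $e_j$ into the span of the $e_i$ with $d_i=d_j-1$, and it strictly lowers depth. Since depths lie in $\{0,\dots,q\}$, it follows that $N_0^{q+1}=0$. Applying coordinate recovery with $i=n$, where $d_n=q$, and $v=e_n$ gives $(EN_0^qe_n)_{\alpha_n}=1$, so $N_0^q\ne0$.

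For the scaling identity, note that $D_sD_\varepsilon=D_{s\varepsilon}$. Then
$$sD_sN_\varepsilon D_s^{-1}=s\varepsilon\,D_{s\varepsilon}AD_{s\varepsilon}^{-1}=N_{s\varepsilon}$$
for $\varepsilon\ne0$, and the case $\varepsilon=0$ follows by polynomial continuity.

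\emph{Uniform decay (the main step).} The difficulty is that the conjugation by $D_\varepsilon$ degenerates as $\varepsilon\to0$, so we cannot simply transfer the bound $\|e^{-\tau K}\|=1$. I will work entrywise. For $\varepsilon\ne0$,
$$(e^{-tN_\varepsilon})_{ij}=\varepsilon^{d_i-d_j}f_{ij}(t\varepsilon),\qquad f_{ij}(\tau):=(Pe^{-\tau K}P^{-1})_{ij}.$$
Each $f_{ij}$ is entire, and $|f_{ij}|\le\|P\|\|P^{-1}\|$ on $\R$ because $K$ is skew-symmetric.

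If $d_i\ge d_j$, the entry is bounded by $C\varepsilon_0^{d_i-d_j}$.

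If $d_i<d_j$, set $k=d_j-d_i$. The left-hand side is entire in $\varepsilon$, since $N_\varepsilon$ is polynomial in $\varepsilon$, so $f_{ij}$ must vanish to order at least $k$ at $\tau=0$. By Taylor's theorem, $|f_{ij}(\tau)|\le C|\tau|^k$ for $|\tau|\le1$. We now split according to the size of $t\varepsilon$:
\begin{itemize}
\item If $|t\varepsilon|\le1$, the entry is bounded by $|\varepsilon|^{-k}C|t\varepsilon|^k=Ct^k$.
\item If $|t\varepsilon|\ge1$, then $|\varepsilon|^{-k}\le t^k$, so the entry is again at most $Ct^k$.
\end{itemize}
Since $k\le q\le n-1$, every entry is bounded by $C(1+t^{n-1})$, uniformly in $t\ge0$ and $0<|\varepsilon|\le\varepsilon_0$. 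The case $\varepsilon=0$ follows by continuity.

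Finally, $aI_n$ commutes with $N_\varepsilon$, so $e^{-t(aI_n+N_\varepsilon)}=e^{-at}e^{-tN_\varepsilon}$. Using $e^{-at/2}(1+t^{n-1})\le C$ for all $t\ge0$ gives the bound $Ce^{-at/2}$ in \eqref{eq:matrix-decay}.
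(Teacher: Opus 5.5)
Your proof is correct. The algebraic parts (the entry formula, coordinate recovery, nilpotency of $N_0$, and the scaling identity) follow the paper's Step~1 essentially line by line. Your observation that $N_0$ lowers depth by exactly one is the same as the paper's remark that $(N_0^k)_{ij}\ne0$ forces $d_j=d_i+k$.

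The genuine difference is in the uniform decay bound.

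\emph{The paper's route.} The paper takes a unitary Schur form $U^*N_\varepsilon U=\Lambda+R$. It uses only two facts: the spectrum of $N_\varepsilon$ is purely imaginary (by similarity to $\varepsilon K$), and $\norm{R}$ is bounded uniformly in $\varepsilon$. It then shows that the Dyson series for $W(t)=e^{t\Lambda}e^{-t(\Lambda+R)}$ terminates after $n-1$ terms, which gives $C(1+t^{n-1})$. This argument never touches the ill-conditioned similarity $D_\varepsilon P$. It would apply to any bounded family of matrices with imaginary spectrum.

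\emph{Your route.} You use that similarity explicitly, writing $e^{-tN_\varepsilon}=D_\varepsilon P e^{-t\varepsilon K}P^{-1}D_\varepsilon^{-1}$. You then control the bad factor $\varepsilon^{-(d_j-d_i)}$ entrywise:
\begin{itemize}
\item for $|t\varepsilon|\le1$, by the vanishing of $f_{ij}$ to order $d_j-d_i$ at $0$;
\item for $|t\varepsilon|\ge1$, by $|\varepsilon|^{-1}\le t$, using $|f_{ij}|\le\norm{P}\norm{P^{-1}}$ from the orthogonality of $e^{-\tau K}$.
\end{itemize}

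\emph{What each buys.} Your argument is more elementary, with no Schur decomposition or iterated integrals. It is also sharper: it yields $|(e^{-tN_\varepsilon})_{ij}|\le C(1+t^{\max(d_j-d_i,0)})$, hence $\norm{e^{-tN_\varepsilon}}\le C(1+t^{q})$. The exponent $q$ is optimal, since $e^{-tN_0}$ is a matrix polynomial in $t$ of exact degree $q$.

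One small tightening. The vanishing-order step is valid as stated once you fix $t$ (say $t=1$): then $\varepsilon^{-k}f_{ij}(\varepsilon)$ agrees, for $\varepsilon\ne0$, with the entire function $(e^{-N_\varepsilon})_{ij}$. You can also read the vanishing off directly, since $f_{ij}^{(r)}(0)=(-1)^r(A^r)_{ij}$. This is zero for $r<d_j-d_i$ by the band structure $A_{ij}=0$ for $d_j>d_i+1$ that you already proved.
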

\begin{remark}
The first relation shows that $N_\varepsilon$ remains well defined as
$\varepsilon\to0$. The second one describes how each coordinate is
recovered through the observation according to its Kalman depth. The
nilpotency of $N_0$ will be used in the zero-coupling limit. The scaling
identity is used for short time intervals, while the last estimate gives
the uniform semigroup bounds needed below.
\end{remark}
\begin{proof}
\step{Step 1. Algebraic properties of $N_\varepsilon$.}
From
$
A=PKP^{-1},
$
we obtain $PK=AP$. Taking the $i$th row gives
$$
p_iK=\sum_{j=1}^n A_{ij}p_j.
$$
By the construction of the adapted basis,
$p_i=b_{\alpha_i}K^{d_i}$, and hence
$$
p_iK=b_{\alpha_i}K^{d_i+1}\in\Row_{d_i+1}.
$$
Since the rows with $d_j\le d_i+1$ form a basis of
$\Row_{d_i+1}$, it follows that
$A_{ij}=0$ whenever $d_j>d_i+1$.
Using
$
N_\varepsilon
=
D_\varepsilon(\varepsilon A)D_\varepsilon^{-1},
$
we obtain
$
(N_\varepsilon)_{ij}
=
A_{ij}\varepsilon^{1+d_i-d_j}
$
when $d_j\le d_i+1$, while the remaining entries vanish. This proves
\eqref{eq:matrix-entries}. In particular, all powers of $\varepsilon$
that occur are nonnegative integers, so $N_\varepsilon$ extends
polynomially to $\varepsilon=0$.

We next prove \eqref{eq:coordinate-recovery}. For $\varepsilon\ne0$,
$
N_\varepsilon^r
=
\varepsilon^rD_\varepsilon A^rD_\varepsilon^{-1}.
$
Using $ED_\varepsilon=E$, $EP=B$, and
$A^r=PK^rP^{-1}$, we obtain
\begin{equation}\label{eq:observed-powers}
EN_\varepsilon^r
=
\varepsilon^rBK^rP^{-1}D_\varepsilon^{-1},
\qquad r\ge0.
\end{equation}
Take $r=d_i$. Since
$p_i=b_{\alpha_i}K^{d_i}$, the $\alpha_i$th row of $BK^{d_i}$ is
$p_i$. Hence the $\alpha_i$th row of the right-hand side of
\eqref{eq:observed-powers} is
$
\varepsilon^{d_i}p_iP^{-1}D_\varepsilon^{-1}.
$
Since $p_i$ is the $i$th row of $P$,
$
p_iP^{-1}=e_i^\top,
$
and therefore
$
\varepsilon^{d_i}e_i^\top D_\varepsilon^{-1}=e_i^\top.
$
Thus
$$
(EN_\varepsilon^{d_i}v)_{\alpha_i}=v_i.
$$

Since both sides depend polynomially on $\varepsilon$, the identity
also holds at $\varepsilon=0$.
At $\varepsilon=0$, \eqref{eq:matrix-entries} shows that
$(N_0)_{ij}$ can be nonzero only when
$
d_j=d_i+1.
$
Consequently, if $(N_0^k)_{ij}\ne0$, then
$
d_j=d_i+k.
$
Since all depths lie between $0$ and $q$, this gives
$
N_0^{q+1}=0.
$
On the other hand, choose $i$ with $d_i=q$. By
\eqref{eq:coordinate-recovery},
$
(EN_0^qv)_{\alpha_i}=v_i,
$
and hence $N_0^q\ne0$. This proves \eqref{eq:nilpotent-limit}.

Finally, for every nonzero entry,
$$
\begin{aligned}
(sD_sN_\varepsilon D_s^{-1})_{ij}
=
A_{ij}s^{1+d_i-d_j}
\varepsilon^{1+d_i-d_j}
=
A_{ij}(s\varepsilon)^{1+d_i-d_j}.
\end{aligned}
$$
By \eqref{eq:matrix-entries}, this is the $(i,j)$ entry of
$N_{s\varepsilon}$. Therefore,
$$
sD_sN_\varepsilon D_s^{-1}=N_{s\varepsilon}.
$$

\step{Step 2. Uniform bounds for the matrix exponential.}
For $\varepsilon\ne0$, $N_\varepsilon$ is similar to $\varepsilon K$.
Since $K$ is skew-symmetric, all eigenvalues of $N_\varepsilon$ are
purely imaginary. At $\varepsilon=0$, \eqref{eq:nilpotent-limit} shows
that the only eigenvalue of $N_0$ is zero.

For each $|\varepsilon|\le\varepsilon_0$, take a unitary Schur
factorization
$$
U^*N_\varepsilon U=\Lambda+R,
$$
where $\Lambda$ is diagonal with purely imaginary entries and $R$ is
strictly upper triangular. Since the family $N_\varepsilon$ is uniformly
bounded on $[-\varepsilon_0,\varepsilon_0]$, there is $M>0$, independent
of $\varepsilon$, such that $\|R\|\le M$.

To estimate $e^{-t(\Lambda+R)}$, set
$
W(t)=e^{t\Lambda}e^{-t(\Lambda+R)}.
$
Then
$$
W'(t)=-R(t)W(t),\qquad W(0)=I,
$$
where
$$
R(t)=e^{t\Lambda}Re^{-t\Lambda}.
$$
Since $e^{t\Lambda}$ is unitary, we have
$
\|R(t)\|=\|R\|\le M.
$
Moreover, every $R(t)$ is strictly upper triangular, so the product of
any $n$ such matrices is zero.
Integrating the equation for $W$ gives
$$
W(t)=I-\int_0^t R(s)W(s)\,ds.
$$
Iterating this identity, the expansion terminates after $n-1$ steps and
gives
$$
W(t)
=
I+
\sum_{k=1}^{n-1}(-1)^k
\int_{0<s_k<\cdots<s_1<t}
R(s_1)\cdots R(s_k)\,ds_k\cdots ds_1.
$$
Using $\|R(s)\|\le M$ and the fact that the simplex
$
0<s_k<\cdots<s_1<t
$
has measure $t^k/k!$, we obtain
$$
\|W(t)\|
\le
\sum_{k=0}^{n-1}\frac{(Mt)^k}{k!}
\le
C(1+t^{n-1}).
$$

Since
$
e^{-tN_\varepsilon}
=
Ue^{-t\Lambda}W(t)U^*
$
and both $U$ and $e^{-t\Lambda}$ are unitary, we have
$$
\|e^{-tN_\varepsilon}\|
\le
C(1+t^{n-1}).
$$
Finally,
$$
\|e^{-t(aI_n+N_\varepsilon)}\|
=
e^{-at}\|e^{-tN_\varepsilon}\|
\le
Ce^{-at}(1+t^{n-1})
\le
Ce^{-at/2}.
$$
This proves \eqref{eq:matrix-decay}.
\end{proof}

We shall also use
\begin{equation}\label{eq:intertwining}
N_\varepsilon D_\varepsilon P
=\varepsilon D_\varepsilon PK,
\qquad
ED_\varepsilon P=B,
\qquad \varepsilon\in\R.
\end{equation}
For $\varepsilon\ne0$, these identities follow directly from
\eqref{eq:normalized-matrix}; the case $\varepsilon=0$ follows by
polynomial continuity. They show that the transformation
$w_\varepsilon=D_\varepsilon Py_\varepsilon$ is compatible with both
the equation and the observation.

For a Dirichlet eigenfunction with eigenvalue $\lambda$,
\eqref{eq:normalized-system} reduces to
$$
v'(t)=-\lambda(aI_n+N_\varepsilon)v(t).
$$
We next establish a uniform observability estimate for the single spatial mode from the observed components on a measurable time set.
\begin{proposition}\label{prop:modal}
Fix $\varepsilon_0>0$. There is $C>0$, depending only on
$a,B,K,\varepsilon_0$, such that
\begin{equation}\label{eq:modal-estimate}
\left|e^{-\lambda(aI_n+N_\varepsilon)\delta}v\right|
\le C\frac{\delta^{n-1}\bigl(1+(\lambda\delta)^{-q}\bigr)}{|F|^n}
\int_F\left|Ee^{-\lambda(aI_n+N_\varepsilon)t}v\right|\,dt
\end{equation}
for every $|\varepsilon|\le\varepsilon_0$, $\lambda,\delta>0$,
$v\in\R^n$, and measurable $F\subset(0,\delta)$ with $|F|>0$.
\end{proposition}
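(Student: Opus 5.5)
The plan is to separate the scalar damping from the oscillatory part, then use a Turán--Nazarov type inequality to pass from the measurable set $F$ to a supremum over the whole interval $(0,\delta)$. What remains is a uniform finite-dimensional observability inequality on $(0,\delta)$, which I treat by rescaling time with Lemma~\ref{lem:matrices}.

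\emph{Step 1 (removing the damping).} Write $g(t)=Ee^{-\lambda N_\varepsilon t}v$. Then $Ee^{-\lambda(aI_n+N_\varepsilon)t}v=e^{-a\lambda t}g(t)$, and $e^{-a\lambda t}\ge e^{-a\lambda\delta}$ on $(0,\delta)$. The left side of \eqref{eq:modal-estimate} equals $e^{-a\lambda\delta}|e^{-\lambda\delta N_\varepsilon}v|$, so the factor $e^{-a\lambda\delta}$ cancels. It therefore suffices to prove
\[
|e^{-\lambda\delta N_\varepsilon}v|\le C\frac{\delta^{n-1}(1+(\lambda\delta)^{-q})}{|F|^n}\int_F|g(t)|\,dt .
\]

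\emph{Step 2 (from $F$ to $(0,\delta)$).} For $\varepsilon\ne0$, $N_\varepsilon$ is similar to $\varepsilon K$, so it has purely imaginary eigenvalues; for $\varepsilon=0$ it is nilpotent by \eqref{eq:nilpotent-limit}. Hence every component of $g$ is an exponential polynomial $\sum_k p_k(t)e^{i\omega_kt}$ with real frequencies and total order at most $n$. I would invoke Nazarov's version of the Turán lemma in its $L^1$ form:
\[
\sup_{(0,\delta)}|h|\le\Big(\frac{C\delta}{|F|}\Big)^{n-1}\frac1{|F|}\int_F|h| .
\]
Its constant depends only on $n$, not on the frequencies, so it applies uniformly in $\lambda$ and $\varepsilon$ (the case of repeated frequencies follows by a limiting argument). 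This produces exactly the factor $\delta^{n-1}/|F|^n$. It then remains to prove
\[
|e^{-\lambda\delta N_\varepsilon}v|\le C(1+\mu^{-q})\sup_{t\in(0,\delta)}|g(t)|,\qquad \mu:=\lambda\delta .
\]

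\emph{Step 3 (rescaling).} Set $t=\delta\sigma$ with $\sigma\in(0,1)$. By \eqref{eq:time-scaling}, $\mu N_\varepsilon=D_\mu^{-1}N_{\mu\varepsilon}D_\mu$. Since $ED_\mu^{-1}=E$, we get $g(\delta\sigma)=Ee^{-\sigma N_\eta}w$ and $e^{-\mu N_\varepsilon}v=D_\mu^{-1}e^{-N_\eta}w$, where $\eta=\mu\varepsilon$ and $w=D_\mu v$. Because $\|D_\mu^{-1}\|\le1+\mu^{-q}$, everything reduces to the uniform estimate
\[
|e^{-N_\eta}w|\le C\sup_{\sigma\in(0,1)}|Ee^{-\sigma N_\eta}w| .
\]
For $|\eta|\le R$ this follows from a compactness argument. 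The right side is a norm in $w$: if it vanished, differentiating at $\sigma=0$ and using \eqref{eq:coordinate-recovery} would give $w=0$. Moreover, both sides depend continuously on $(\eta,w)$ with $|w|=1$, and $|e^{-N_\eta}w|\le C|w|$ by \eqref{eq:matrix-decay}.

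\emph{Main obstacle: the regime $|\eta|\ge R$.} This occurs when $\mu\ge R/\varepsilon_0$ is large, so the factor $\mu^{-q}$ is harmless there. However, the weights make the naive conversion $N_\eta=D_\eta(\eta A)D_\eta^{-1}$ lose a power $|\eta|^q$. My first attempt is to undo the $D$-scaling and work in the original variables. There, $|e^{-\mu N_\varepsilon}v|$ is comparable to the weighted norm \eqref{eq:weight-comparison} applied to $e^{-\mu\varepsilon K}P^{-1}D_\varepsilon^{-1}v$. I would then use that $e^{-sK}$ is orthogonal and that the Kalman observability constant of $(B,K)$ on a unit time interval also controls any longer interval of length $|\mu\varepsilon|\ge R$. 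The point to check carefully is that the $\varepsilon$-weights on the two sides match, using the lower-triangular structure \eqref{eq:matrix-entries} in the $\eta$-rescaled frame. If this fails, I would split according to whether $\mu|\varepsilon|$ is large and use the fixed-$\varepsilon$ Kalman estimate directly.
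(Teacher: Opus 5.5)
Your Steps 1 and 2 are sound; the $L^1$ form of the Tur\'an--Nazarov inequality is what the paper obtains from the $L^\infty$ form plus Chebyshev. Step 3 is also correct whenever $|\eta|=|\mu\varepsilon|\le R$; this is the paper's argument for $L=\lambda\delta<1$.

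The gap is in the regime you flag, and it is not a bookkeeping issue. You say everything reduces to $|e^{-N_\eta}w|\le C\sup_{\sigma\in(0,1)}|Ee^{-\sigma N_\eta}w|$, but this estimate fails uniformly for large $|\eta|$. In the complex heat example, $N_\eta=\begin{pmatrix}0&1\\-\eta^2&0\end{pmatrix}$ and $E=(1,0)$. For $w=(0,1)^\top$ one gets $Ee^{-\sigma N_\eta}w=-\sin(\eta\sigma)/\eta$, so the right side is at most $C|\eta|^{-1}$. Yet $e^{-N_\eta}w=(0,1)^\top$ whenever $\eta=2\pi k$. So the loss of $|\eta|^q$ is genuine. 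It comes from the bound $\norm{D_\mu^{-1}}\le1+\mu^{-q}$, which discards the factors $\mu^{-d_i}$ that $D_\mu^{-1}$ places on the deep coordinates. Once $|\mu\varepsilon|\ge R$ the rescaled frame must therefore be abandoned, and as written your proposal does not prove the estimate there.

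Your first fallback does close the gap, and no matching of weights is needed. For $\varepsilon\ne0$ put $y=P^{-1}D_\varepsilon^{-1}v$. Then $e^{-sN_\varepsilon}v=D_\varepsilon Pe^{-s\varepsilon K}y$ and $Ee^{-sN_\varepsilon}v=Be^{-s\varepsilon K}y$. Since $e^{-s\varepsilon K}$ is orthogonal,
\[
|e^{-\mu N_\varepsilon}v|\le\max(1,\varepsilon_0^{q})\norm{P}\,|y| .
\]
The set $\{s\varepsilon:0<s<\mu\}$ contains $(0,R)$ or $(-R,0)$. The Kalman condition plus compactness then give $\sup_{0<s<\mu}|Be^{-s\varepsilon K}y|\ge c_R|y|$, with $c_R$ independent of $\varepsilon$. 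Hence $|e^{-\mu N_\varepsilon}v|\le C\sup_{(0,\delta)}|g|$ in this regime.

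The Kalman argument must be applied to $(B,K)$ in the variable $\tau=s\varepsilon$ on a window of fixed length $R$. A fixed-$\varepsilon$ Kalman estimate, as in your second fallback, would give an $\varepsilon$-dependent constant.

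The paper avoids the large-$\eta$ regime altogether. For $L=\lambda\delta\ge1$ it does not compress $(0,L)$ to unit length. Instead it applies the unit Gramian bound with $\eta=\varepsilon$ on the last window $[L-1,L]$, then uses the uniform bound on $e^{-N_\varepsilon}$. It rescales only for $L<1$, where $|L\varepsilon|\le\varepsilon_0$ automatically. Every parameter thus stays in the compact range $|\eta|\le\varepsilon_0$, and a single compactness argument suffices.
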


\begin{proof}
\step{Step 1. Recovery from the observation on a full time interval.}
For $L>0$, set

$$
u(s)=e^{-sN_\varepsilon}v,\qquad h(s)=Eu(s),
\qquad 0\le s\le L.
$$
We first prove
\begin{equation}\label{eq:endpoint-recovery}
|u(L)|
\le
C(1+L^{-q})\norm{h}_{L^\infty(0,L)}.
\end{equation}

For $|\eta|\le\varepsilon_0$, consider the observation Gramian

$$
G_\eta
=
\int_0^1
e^{-sN_\eta^\top}E^\top E e^{-sN_\eta}\,ds.
$$
If $v^\top G_\eta v=0$, then $Ee^{-sN_\eta}v=0$ on $[0,1]$.
Differentiating at $s=0$ gives $EN_\eta^rv=0$ for every $r\ge0$.
Taking $r=d_i$ and using \eqref{eq:coordinate-recovery}, we obtain
$v_i=0$ for every $i$. Hence $G_\eta$ is positive definite.
By continuity in $\eta$ and compactness of
$[-\varepsilon_0,\varepsilon_0]$, there is $\gamma>0$ such that
\begin{equation}\label{eq:unit-gramian}
\gamma|v|^2
\le
\int_0^1|Ee^{-sN_\eta}v|^2ds,
\qquad |\eta|\le\varepsilon_0.
\end{equation}

If $L\ge1$, apply \eqref{eq:unit-gramian} to $u(L-1)$. Since
$u(L-1+s)=e^{-sN_\varepsilon}u(L-1)$,

$$
\gamma|u(L-1)|^2
\le
\int_{L-1}^L|h(s)|^2\,ds
\le
\norm{h}_{L^\infty(0,L)}^2.
$$
The uniform bound for $e^{-N_\varepsilon}$ then gives
$|u(L)|\le C\norm{h}_{L^\infty(0,L)}$.

Now let $0<L<1$ and define
$\zeta(r)=D_Lu(Lr)$ for $0\le r\le1$. By
\eqref{eq:time-scaling},

$$
\zeta'(r)=-N_{L\varepsilon}\zeta(r),
\qquad
E\zeta(r)=h(Lr).
$$
Since $|L\varepsilon|\le\varepsilon_0$, we may apply
\eqref{eq:unit-gramian} with $\eta=L\varepsilon$. Together with the
uniform matrix bound on $[0,1]$, this gives

$$
|\zeta(1)|
\le
C|\zeta(0)|
\le
C\left(\int_0^1|h(Lr)|^2\,dr\right)^{1/2}
\le
C\norm{h}_{L^\infty(0,L)}.
$$
Since $u(L)=D_L^{-1}\zeta(1)$ and
$\norm{D_L^{-1}}=L^{-q}$, we obtain
$$
|u(L)|
\le
CL^{-q}\norm{h}_{L^\infty(0,L)}.
$$
This proves \eqref{eq:endpoint-recovery}.
\step{Step 2. Observation on a measurable time set.}
Let $I\subset[0,L]$ be an interval and let $G\subset I$ be measurable
with $|G|>0$. We claim that
\begin{equation}\label{eq:time-sup}
\norm{h}_{L^\infty(I)}
\le
C\left(\frac{|I|}{|G|}\right)^{n-1}
\norm{h}_{L^\infty(G)}.
\end{equation}

For $\varepsilon\ne0$, $N_\varepsilon$ is similar to
$\varepsilon K$. Since $K$ is skew-symmetric, each scalar component of
$h(s)=Ee^{-sN_\varepsilon}v$ is a linear combination of at most $n$
exponentials $c_ke^{i\tau_ks}$ with $\tau_k\in\R$.
Applying the Tur\'{a}n--Nazarov inequality
\cite{Nazarov1994}; see also
\cite[Theorem~1.1]{FriedlandYomdin2013},
componentwise and using the equivalence of norms in $\R^m$, we obtain
\eqref{eq:time-sup}, with a constant independent of the frequencies.
The result at $\varepsilon=0$ follows by continuity; in this case,
\eqref{eq:nilpotent-limit} also shows that $h$ is a polynomial of
degree at most $q$.

To pass from the $L^\infty$ observation to the $L^1$ observation,
consider the subset of $G$ on which

$$
|h(s)|
\le
\frac{2}{|G|}
\int_G|h(\tau)|\,d\tau.
$$
By Chebyshev's inequality, this subset has measure at least $|G|/2$.
Applying \eqref{eq:time-sup} to it yields
\begin{equation}\label{eq:time-remez}
\norm{h}_{L^\infty(I)}
\le
C\frac{|I|^{n-1}}{|G|^n}
\int_G|h(s)|\,ds.
\end{equation}

\step{Step 3. Restoring the eigenvalue and the heat factor.}
Take
$I=(0,\lambda\delta)$ and $G=\lambda F$.
Then $|G|=\lambda|F|$, and the change of variables $s=\lambda t$
gives
$$
\int_G|h(s)|\,ds
=
\lambda\int_F|h(\lambda t)|\,dt.
$$
Applying \eqref{eq:endpoint-recovery} with $L=\lambda\delta$ and then
\eqref{eq:time-remez}, we obtain
$$
\begin{aligned}
|u(\lambda\delta)|
\le
C\bigl(1+(\lambda\delta)^{-q}\bigr)
\frac{(\lambda\delta)^{n-1}}{(\lambda|F|)^n}
\lambda\int_F|h(\lambda t)|\,dt
=
C\frac{
\delta^{n-1}\bigl(1+(\lambda\delta)^{-q}\bigr)
}{
|F|^n
}
\int_F|h(\lambda t)|\,dt.
\end{aligned}
$$

Since
$
e^{-\lambda(aI_n+N_\varepsilon)\delta}
=
e^{-a\lambda\delta}e^{-\lambda\delta N_\varepsilon},
$
we have
$$
\left|
e^{-\lambda(aI_n+N_\varepsilon)\delta}v
\right|
=
e^{-a\lambda\delta}|u(\lambda\delta)|.
$$
Hence
$$
\left|
e^{-\lambda(aI_n+N_\varepsilon)\delta}v
\right|
\le
C\frac{
\delta^{n-1}\bigl(1+(\lambda\delta)^{-q}\bigr)
}{
|F|^n
}
\int_F
e^{-a\lambda\delta}|h(\lambda t)|\,dt.
$$
For $t\in F\subset(0,\delta)$,
$e^{-a\lambda\delta}\le e^{-a\lambda t}$, and
$$
e^{-a\lambda t}h(\lambda t)
=
Ee^{-\lambda(aI_n+N_\varepsilon)t}v.
$$
Therefore,
$$
\left|
e^{-\lambda(aI_n+N_\varepsilon)\delta}v
\right|
\le
C\frac{
\delta^{n-1}\bigl(1+(\lambda\delta)^{-q}\bigr)
}{
|F|^n
}
\int_F
\left|
Ee^{-\lambda(aI_n+N_\varepsilon)t}v
\right|\,dt.
$$
This proves \eqref{eq:modal-estimate}.
\end{proof}

\section{Integral interpolation and weighted observability}\label{sec:measurable}

In this section, we prove Theorem~\ref{thm:weighted}, which gives the weighted observability estimate on the measurable observation set $\D$. Throughout this section, we assume that \eqref{eq:kalman-rank} holds and fix $\varepsilon_0>0$.

Let \(L=-\Delta_D\) be the positive Dirichlet Laplacian, \(\{\phi_j\}_{j\ge1}\) be the real orthonormal basis of eigenfunctions such that
\begin{equation}\label{eq:eigenpairs}
L\phi_j=\lambda_j\phi_j,\qquad
0<\lambda_1<\lambda_2\le\cdots,\qquad
\lambda_j\longrightarrow\infty.
\end{equation}
Set \(H=L^2(\Omega;\R^n)\). If
$$
z=\sum_{j=1}^\infty z_j\phi_j\in H,
$$
then the solution of \eqref{eq:normalized-system} is given by
\begin{equation}\label{eq:normalized-semigroup}
S_\varepsilon(t)z
=
\sum_{j=1}^\infty
e^{-\lambda_j(aI_n+N_\varepsilon)t}z_j\phi_j.
\end{equation}
By \eqref{eq:matrix-decay}, \(S_\varepsilon(t)\) is a bounded strongly continuous semigroup on \(H\), uniformly for
\(|\varepsilon|\le\varepsilon_0\).

To distinguish the low-frequency modes from the rapidly decaying high-frequency modes, we introduce the corresponding spectral projection. For \(\Lambda\ge\lambda_1\), let \(\Pi_\Lambda\) denote the orthogonal projection onto the eigenspaces corresponding to
\(\lambda_j\le\Lambda\), acting componentwise. Since \(\Pi_\Lambda\)
acts only on the spatial variable, it commutes with constant matrices
and with \(S_\varepsilon(t)\). By \eqref{eq:matrix-decay},
\begin{equation}\label{eq:high-frequency}
\norm{S_\varepsilon(t)}\le M,\qquad
\norm{S_\varepsilon(t)(I-\Pi_\Lambda)z}_2
\le Me^{-c_0\Lambda t}\norm{z}_2,
\qquad c_0=\frac a2,
\end{equation}
for all \(t\ge0\), \(\Lambda\ge\lambda_1\), and
\(|\varepsilon|\le\varepsilon_0\).

For the low-frequency part, we use the spectral inequality from
\cite[Theorems~3 and~5]{ApraizEscauriazaWangZhang2014}.
Fix \(x_*\in\Omega\), \(0<R\le1\), and \(0<d_*\le |B_R|\), with
\(B_{4R}(x_*)\subset\Omega\). Then there exists
\(C=C(\Omega,R,d_*/|B_R|)>0\) such that
\begin{equation}\label{eq:spatial-spectral}
\norm{f}_2
\le
Ce^{C\sqrt\Lambda}
\int_\omega |f(x)|\,dx
\end{equation}
for every \(\R^m\)-valued \(f\in\operatorname{Ran}\Pi_\Lambda\) and
every measurable set \(\omega\subset B_R(x_*)\) satisfying
\(|\omega|\ge d_*\).

The constant is uniform over all such sets \(\omega\). Indeed, one may
apply the scalar estimate to a measurable subset of \(\omega\) of
measure \(d_*\). Applying the scalar estimate componentwise and using
$$
\left(
\sum_{k=1}^m
\left(\int_\omega |f_k|\,dx\right)^2
\right)^{1/2}
\le
\int_\omega |f(x)|\,dx
$$
gives the vector-valued estimate \eqref{eq:spatial-spectral}.

As a preparation for the proof of Theorem~\ref{thm:weighted}, we next establish an integral interpolation estimate. The proof follows the argument of \cite[Theorem~3.1]{FuWangYuZhu2026}, with constants uniform for \(|\varepsilon|\le\varepsilon_0\).

\begin{proposition}\label{prop:interpolation}
Fix \(B_R(x_*)\) and \(d_*\) as above. Let \(0<S_1<S_2\le T\), and let
\(F\subset(S_1,S_2)\) be measurable with \(\mu=|F|>0\).
Let \(\mathcal W\subset B_R(x_*)\times F\) be measurable, and set
$$
\omega_t=\{x\in B_R(x_*):(x,t)\in\mathcal W\},
\qquad t\in F.
$$
Assume that
$
|\omega_t|\ge d_*
$
for almost every \(t\in F\). Then there exists a constant $C>0$, depending only on
$\Omega,a,T,B,K,\varepsilon_0,R$ and $d_*/|B_R|$, such that
\begin{equation}\label{eq:interpolation}
\norm{S_\varepsilon(S_2)z}_2
\le
C\mu^{-n}e^{C/S_1}
\left(
\int_F\int_{\omega_t}
|ES_\varepsilon(t)z(x)|\,dx\,dt
\right)^{1/2}
\norm{z}_2^{1/2}
\end{equation}
for every \(z\in H\) and \(|\varepsilon|\le\varepsilon_0\).
\end{proposition}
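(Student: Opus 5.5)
We prove \eqref{eq:interpolation} by the standard low/high frequency splitting, combining the spatial spectral inequality \eqref{eq:spatial-spectral} for each fixed $t\in F$ with the modal estimate of Proposition~\ref{prop:modal} in time, and then optimizing the cutoff $\Lambda$. Write $z=\Pi_\Lambda z+(I-\Pi_\Lambda)z$ with $\Lambda\ge\lambda_1$ to be chosen.

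\emph{Low-frequency step.} For the low-frequency part we use the following bound:
\[
\norm{S_\varepsilon(S_2)\Pi_\Lambda z}_2\le C\mu^{-n}(1+\Lambda^{-q}S_1^{-q})\ldots\int_F\norm{ES_\varepsilon(t)\Pi_\Lambda z}_2\,dt .
\]
To get it, expand in eigenfunctions. For each mode apply Proposition~\ref{prop:modal} with $\delta=S_2$ and the set $F\subset(0,S_2)$. This gives
\[
|e^{-\lambda_j(aI_n+N_\varepsilon)S_2}z_j|\le C\frac{T^{n-1}(1+(\lambda_1S_1)^{-q})}{\mu^n}\int_F|Ee^{-\lambda_j(aI_n+N_\varepsilon)t}z_j|\,dt,
\]
using $\delta=S_2\ge S_1$ and $\lambda_j\ge\lambda_1$, so $(\lambda\delta)^{-q}\le C S_1^{-q}\le Ce^{C/S_1}$. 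Squaring and summing over $\lambda_j\le\Lambda$, Minkowski's integral inequality turns the $\ell^2$ sum of integrals into $\int_F\norm{ES_\varepsilon(t)\Pi_\Lambda z}_2dt$. Since $ES_\varepsilon(t)\Pi_\Lambda z\in\operatorname{Ran}\Pi_\Lambda$ is $\R^m$-valued and $|\omega_t|\ge d_*$ for a.e.\ $t\in F$, \eqref{eq:spatial-spectral} gives
\[
\norm{ES_\varepsilon(t)\Pi_\Lambda z}_2\le Ce^{C\sqrt\Lambda}\int_{\omega_t}|ES_\varepsilon(t)\Pi_\Lambda z|\,dx .
\]
Splitting $\Pi_\Lambda z=z-(I-\Pi_\Lambda)z$ inside the integral, and bounding the error by $|\omega_t|^{1/2}\norm{S_\varepsilon(t)(I-\Pi_\Lambda)z}_2\le Me^{-c_0\Lambda S_1}\norm z_2$ via \eqref{eq:high-frequency} (since $t>S_1$), we arrive at
\[
\norm{S_\varepsilon(S_2)z}_2\le C\mu^{-n}e^{C/S_1}\Bigl(e^{C\sqrt\Lambda}\mathcal O+e^{C\sqrt\Lambda}e^{-c_0\Lambda S_1}\norm z_2\Bigr)+Me^{-c_0\Lambda S_2}\norm z_2,
\]
where $\mathcal O=\int_F\int_{\omega_t}|ES_\varepsilon(t)z|\,dx\,dt$ and the bounded factor $\mu\le T$ is absorbed.

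\emph{Absorbing the $e^{C\sqrt\Lambda}$ factor.} By Young's inequality, $C\sqrt\Lambda\le \tfrac{c_0}{2}\Lambda S_1+C'/S_1$. The two error terms therefore combine into
\[
C\mu^{-n}e^{C/S_1}e^{-c_0\Lambda S_1/2}\norm z_2 ,
\]
while the main term is bounded by
\[
C\mu^{-n}e^{C/S_1}e^{c_0\Lambda S_1/2}\mathcal O .
\]
This yields, for every $\Lambda\ge\lambda_1$,
\[
\norm{S_\varepsilon(S_2)z}_2\le C\mu^{-n}e^{C/S_1}\bigl(e^{\theta}\mathcal O+e^{-\theta}\norm z_2\bigr),\qquad \theta=\tfrac{c_0}{2}\Lambda S_1 .
\]

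\emph{Optimization.} Choose $e^{\theta}=(\norm z_2/\mathcal O)^{1/2}$ when this gives $\Lambda\ge\lambda_1$. The right side then becomes $2(\mathcal O\norm z_2)^{1/2}$, as required. Otherwise $\norm z_2\le e^{c_0\lambda_1T}\mathcal O$. In that case the bound with $\Lambda=\lambda_1$ gives $\norm{S_\varepsilon(S_2)z}_2\le C\mu^{-n}e^{C/S_1}\mathcal O$, and $\mathcal O\le(\mathcal O\cdot e^{c_0\lambda_1 T}\mathcal O)^{1/2}\le C(\mathcal O\norm z_2)^{1/2}$ fails directionally. So instead we use $\mathcal O\le C\norm z_2$, which holds by $\norm{S_\varepsilon(t)}\le M$ and the boundedness of $|\omega_t|$ and $\mu$. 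This gives $\mathcal O=\mathcal O^{1/2}\mathcal O^{1/2}\le C\mathcal O^{1/2}\norm z_2^{1/2}$.

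\emph{Main obstacle.} The delicate point is the low-frequency step. We must interchange the modal time estimate (which lives on $F$ with an $L^1$ time norm) with the spatial $L^1$ on the $t$-dependent sets $\omega_t$. The issue is that Proposition~\ref{prop:modal} gives pointwise-in-mode control with $\int_F|\cdot|\,dt$, and we need Minkowski to move the $\ell^2_j$ norm inside the time integral before applying \eqref{eq:spatial-spectral} at each fixed $t$. We must also ensure all constants stay uniform in $|\varepsilon|\le\varepsilon_0$ and in the sets $\omega_t$, which is exactly what Proposition~\ref{prop:modal} and the uniformity remark after \eqref{eq:spatial-spectral} provide.
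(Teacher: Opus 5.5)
Your proposal is correct and follows essentially the same route as the paper: Proposition~\ref{prop:modal} with Minkowski/Parseval and the spatial spectral inequality for $\Pi_\Lambda z$, the decay \eqref{eq:high-frequency} for $(I-\Pi_\Lambda)z$, then an optimization in $\Lambda$ with a separate treatment of the regime where the observation is comparable to $\norm{z}_2$. The differences are cosmetic. You apply Proposition~\ref{prop:modal} with $\delta=S_2$ instead of shifting by $S_1$, which is legitimate, and the prefactor is in fact bounded since $q\le n-1$. The $\Lambda^{-q}$ in your first display should read $\lambda_1^{-q}$, and you should add that the case $\mathcal O=0$ follows by letting $\Lambda\to\infty$.
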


\begin{proof}
Fix \(z\in H\) and \(|\varepsilon|\le\varepsilon_0\). By
\eqref{eq:high-frequency} and the Cauchy--Schwarz inequality,
$$
ES_\varepsilon(t)z\in
L^1\bigl(B_R(x_*)\times(0,T);\R^m\bigr).
$$
Hence Fubini's theorem applies to the integrals below. Throughout the
proof, \(C\) denotes a positive constant with the dependence stated in
the proposition, which may change from line to line.

First, we estimate the low-frequency part. Fix
\(\Lambda\ge\lambda_1\) and write
$$
\Pi_\Lambda S_\varepsilon(S_1)z
=
\sum_{\lambda_j\le\Lambda}v_j\phi_j,
\qquad
v_j=e^{-\lambda_j(aI_n+N_\varepsilon)S_1}z_j.
$$
Set
$
\delta=S_2-S_1.
$
Since \(F\subset(S_1,S_2)\), we have
$F-S_1\subset(0,\delta)$ and
$|F-S_1|=|F|=\mu$.
We may therefore apply Proposition~\ref{prop:modal} to each \(v_j\)
with the time set \(F-S_1\). Since \(q\le n-1\),
$$
\delta^{n-1}\bigl(1+(\lambda_j\delta)^{-q}\bigr)
=
\delta^{n-1}
+
\lambda_j^{-q}\delta^{n-1-q}
\le C
$$
for \(0<\delta\le T\) and \(\lambda_j\ge\lambda_1\). Hence
$$
\left|
e^{-\lambda_j(aI_n+N_\varepsilon)\delta}v_j
\right|
\le
C\mu^{-n}
\int_F
\left|
Ee^{-\lambda_j(aI_n+N_\varepsilon)(t-S_1)}v_j
\right|\,dt.
$$
Since
$$
e^{-\lambda_j(aI_n+N_\varepsilon)(t-S_1)}v_j
=
e^{-\lambda_j(aI_n+N_\varepsilon)t}z_j,
$$
the right-hand side is the observation of the original solution on the
\(j\)th mode. Taking the \(\ell^2\)-norm over
\(\lambda_j\le\Lambda\), and then using Minkowski's integral
inequality and Parseval's identity, we obtain
\begin{equation}\label{eq:low-temporal}
\norm{S_\varepsilon(S_2)\Pi_\Lambda z}_2
\le
C\mu^{-n}
\int_F
\norm{ES_\varepsilon(t)\Pi_\Lambda z}_2\,dt.
\end{equation}
The constant is independent of \(\Lambda\). For almost every \(t\in F\), the function
\(ES_\varepsilon(t)\Pi_\Lambda z\) belongs to
\(\operatorname{Ran}\Pi_\Lambda\). Applying
\eqref{eq:spatial-spectral} gives
\begin{equation}\label{eq:low-spatial}
\norm{S_\varepsilon(S_2)\Pi_\Lambda z}_2
\le
C\mu^{-n}e^{C\sqrt\Lambda}
\int_F\int_{\omega_t}
|ES_\varepsilon(t)\Pi_\Lambda z(x)|\,dx\,dt.
\end{equation}

Next, we estimate the high-frequency part. Since
$
\Pi_\Lambda z=z-(I-\Pi_\Lambda)z,
$
the observation term in \eqref{eq:low-spatial} is bounded by the full
observation and the contribution of \((I-\Pi_\Lambda)z\). By
\eqref{eq:high-frequency},
\begin{align*}
\int_F\int_{\omega_t}
|ES_\varepsilon(t)(I-\Pi_\Lambda)z(x)|\,dx\,dt
\le
|B_R|^{1/2}\norm{E}
\int_F
\norm{S_\varepsilon(t)(I-\Pi_\Lambda)z}_2\,dt
\le
Ce^{-c_0\Lambda S_1}\norm{z}_2.
\end{align*}
Moreover,
$$
\norm{S_\varepsilon(S_2)(I-\Pi_\Lambda)z}_2
\le
Me^{-c_0\Lambda S_2}\norm{z}_2.
$$
Combining these estimates with \eqref{eq:low-spatial} and the
terminal high-frequency estimate gives the following bound. Since
\(S_2>S_1\), \(\mu\le T\), and \(e^{b\sqrt\Lambda}\ge1\), the
terminal high-frequency term is absorbed into the second term on the
right-hand side after enlarging the constant. Thus
\begin{equation}\label{eq:low-high}
\norm{S_\varepsilon(S_2)z}_2
\le
C\mu^{-n}
\left[
e^{b\sqrt\Lambda}
\int_F\int_{\omega_t}
|ES_\varepsilon(t)z(x)|\,dx\,dt
+
e^{b\sqrt\Lambda-c_0\Lambda S_1}\norm{z}_2
\right],
\end{equation}
where \(b>0\) is independent of \(\Lambda\), \(z\), and
\(\varepsilon\).

Finally, let \(0<\theta\le1\) and choose
$
\Lambda
=
\lambda_1+\frac{4}{c_0S_1}\log\frac1\theta.
$
Using
$$
b\sqrt\Lambda
\le
\frac{c_0S_1}{4}\Lambda+\frac{b^2}{c_0S_1},
$$
and \(S_1\le T\), we obtain
$$
e^{b\sqrt\Lambda}
\le
Ce^{C/S_1}\theta^{-1}\quad
\text{and}
\quad
e^{b\sqrt\Lambda-c_0\Lambda S_1}
\le
Ce^{C/S_1}\theta^3
\le
Ce^{C/S_1}\theta.
$$
Hence \eqref{eq:low-high} gives
\begin{equation}\label{eq:interpolation-parameter}
\norm{S_\varepsilon(S_2)z}_2
\le
C\mu^{-n}e^{C/S_1}
\left[
\theta^{-1}
\int_F\int_{\omega_t}
|ES_\varepsilon(t)z(x)|\,dx\,dt
+
\theta\norm{z}_2
\right]
\end{equation}
for every \(0<\theta\le1\).

If
$
0<
\int_F\int_{\omega_t}
|ES_\varepsilon(t)z(x)|\,dx\,dt
<
\norm{z}_2,
$
we choose
$$
\theta
=
\left(
\frac{
\displaystyle
\int_F\int_{\omega_t}
|ES_\varepsilon(t)z(x)|\,dx\,dt
}{
\norm{z}_2
}
\right)^{1/2}.
$$
Then \eqref{eq:interpolation-parameter} gives
$$
\norm{S_\varepsilon(S_2)z}_2
\le
C\mu^{-n}e^{C/S_1}
\left(
\int_F\int_{\omega_t}
|ES_\varepsilon(t)z(x)|\,dx\,dt
\right)^{1/2}
\norm{z}_2^{1/2}.
$$

If
$
\int_F\int_{\omega_t}
|ES_\varepsilon(t)z(x)|\,dx\,dt
\ge
\norm{z}_2>0,
$
the uniform semigroup bound gives
$$
\norm{S_\varepsilon(S_2)z}_2
\le
M\norm{z}_2
\le
M
\left(
\int_F\int_{\omega_t}
|ES_\varepsilon(t)z(x)|\,dx\,dt
\right)^{1/2}
\norm{z}_2^{1/2}.
$$

If the observation integral is zero, we let
\(\theta\downarrow0\) in \eqref{eq:interpolation-parameter}. The case
\(z=0\) is immediate. Thus \eqref{eq:interpolation} follows.
\end{proof}

We now use Proposition~\ref{prop:interpolation} to prove the observability estimate on \(\D\). The argument follows the telescoping series method in
\cite{PhungWang2013,ApraizEscauriazaWangZhang2014,WangZhang2017};
see also \cite[Theorem~1.1]{FuWangYuZhu2026}.

\begin{proof}[Proof of Theorem~\ref{thm:weighted}]
We first prove the uniform observability estimate
\begin{equation}\label{eq:normalized-observability}
\norm{S_\varepsilon(T)z}_2
\le C\int_\D|ES_\varepsilon(t)z(x)|\,dx\,dt,
\qquad z\in H,\quad |\varepsilon|\le\varepsilon_0.
\end{equation}
The weighted estimate for \(y_\varepsilon\) will then follow from the
change of variables introduced above.

\step{Step 1. Select spatial sections of uniformly positive measure.}
By the Lebesgue density theorem, there exist a ball \(B_R(x_*)\) and
an interval \(I\subset(0,T)\) such that
$$
0<R\le1,\qquad
\overline{B_{4R}(x_*)}\subset\Omega,\qquad
\D_0:=\D\cap(B_R(x_*)\times I),\qquad
|\D_0|>0.
$$
For almost every \(t\in I\), set
$$
\D_{0,t}:=\{x\in B_R(x_*):(x,t)\in\D_0\},
\qquad
d_*:=\frac{|\D_0|}{2|I|},
\qquad
\mathcal E:=\{t\in I:|\D_{0,t}|\ge d_*\}.
$$
Since \(|\D_0|\le |B_R||I|\), we have \(0<d_*\le |B_R|\).
By Fubini's theorem,
$$
|\D_0|
=\int_I|\D_{0,t}|\,dt
\le d_*|I|+|B_R||\mathcal E|
=\frac{|\D_0|}{2}+|B_R||\mathcal E|.
$$
Therefore
\begin{equation}\label{eq:good-times}
|\mathcal E|
\ge \frac{|\D_0|}{2|B_R|}>0,
\qquad
|\D_{0,t}|\ge d_*
\quad\text{for a.e. }t\in\mathcal E.
\end{equation}
Thus the sections \(\D_{0,t}\), \(t\in\mathcal E\), satisfy the
spatial-section condition in Proposition~\ref{prop:interpolation}.

\step{Step 2. Apply the interpolation estimate on a sequence of time intervals.}
Choose a density point \(t_*\in\mathcal E\) in the interior of \(I\), and set
\(\kappa=\sqrt{3/2}\). Since \(t_*\) is a density point of
\(\mathcal E\), there exists \(r_0>0\) such that
\(t_*+r_0<\sup I\) and
\begin{equation}\label{eq:density-bound}
|(t_*,t_*+r)\setminus\mathcal E|
\le
\frac{1-\kappa^{-1}}{2}\,r,
\qquad 0<r\le r_0.
\end{equation}
For $j\ge1$, define
$t_j=t_*+r_0\kappa^{-(j-1)}$
and
$\delta_j=t_j-t_{j+1}.$
Then
$$
\delta_{j+1}=\frac{\delta_j}{\kappa},
\qquad
\delta_j=(1-\kappa^{-1})(t_j-t_*).
$$
Using \eqref{eq:density-bound} with \(r=t_j-t_*\), we obtain
\begin{equation}\label{eq:interval-density}
|\mathcal E\cap(t_{j+1},t_j)|
\ge
\frac{\delta_j}{2}.
\end{equation}
Fix \(z\in H\), and set
$$
H_j:=\norm{S_\varepsilon(t_j)z}_2,
\qquad
X_j:=
\int_{\mathcal E\cap(t_{j+1},t_j)}
\int_{\D_{0,t}}
|ES_\varepsilon(t)z(x)|\,dx\,dt.
$$
We apply Proposition~\ref{prop:interpolation} to
\(S_\varepsilon(t_{j+2})z\). After the change of variable
\(s=t-t_{j+2}\), take
$$
S_1=\delta_{j+1},
\qquad
S_2=\delta_j+\delta_{j+1},
\qquad
F=\bigl(\mathcal E\cap(t_{j+1},t_j)\bigr)-t_{j+2}.
$$
Since
$
S_2
=(1-\kappa^{-2})(t_j-t_*)
<r_0<T,
$
we have \(0<S_1<S_2\le T\). On \(B_R(x_*)\times F\), use the
shifted observation set
$
\bigl\{(x,s):(x,s+t_{j+2})\in\D_0\bigr\}.
$
Its spatial section at time \(s\in F\) is
\(\D_{0,s+t_{j+2}}\), whose measure is at least \(d_*\).
Moreover, by \eqref{eq:interval-density},
$$
|F|
=
|\mathcal E\cap(t_{j+1},t_j)|
\ge
\frac{\delta_j}{2}.
$$
Therefore,
$$
|F|^{-n}\le 2^n\delta_j^{-n},
\qquad
e^{C/S_1}
=e^{C/\delta_{j+1}}
=e^{C\kappa/\delta_j}.
$$
The observation term in Proposition~\ref{prop:interpolation} is
exactly \(X_j\), while the initial norm is \(H_{j+2}\). Hence
$$
H_j
\le
C\delta_j^{-n}e^{C/\delta_j}
\bigl(X_jH_{j+2}\bigr)^{1/2}.
$$

Since \(0<\delta_j\le T\) and
$
\sup_{0<s\le T}s^{-n}e^{-1/s}<\infty,
$
the factor \(\delta_j^{-n}\) can be absorbed into the exponential.
Hence there exists \(A>0\), independent of \(j\) and
\(\varepsilon\), such that
\begin{equation}\label{eq:pre-telescope}
H_j
\le
e^{A/\delta_j}
\bigl(X_jH_{j+2}\bigr)^{1/2}.
\end{equation}

\step{Step 3. Sum the estimates and return to the original variables.}
Multiplying \eqref{eq:pre-telescope} by \(e^{-4A/\delta_j}\) gives
$$
e^{-4A/\delta_j}H_j
\le
e^{-3A/\delta_j}
\bigl(X_jH_{j+2}\bigr)^{1/2}.
$$
Using
$
xy\le\frac14x^2+y^2
$
with \(x=X_j^{1/2}\) and
\(y=e^{-3A/\delta_j}H_{j+2}^{1/2}\), we obtain
$$
e^{-4A/\delta_j}H_j
\le
\frac14X_j+e^{-6A/\delta_j}H_{j+2}.
$$
Since \(\kappa^2=3/2\), we have
$
\delta_{j+2}=\frac23\delta_j$
and
$
e^{-6A/\delta_j}
=
e^{-4A/\delta_{j+2}}.
$
Hence
$$
e^{-4A/\delta_j}H_j
-
e^{-4A/\delta_{j+2}}H_{j+2}
\le
\frac14X_j.
$$
Summing over \(j=1,3,\ldots,2N-1\) gives
$$
e^{-4A/\delta_1}H_1
\le
\frac14\sum_{k=0}^{N-1}X_{2k+1}
+
e^{-4A/\delta_{2N+1}}H_{2N+1}.
$$

The time intervals appearing in the sum are disjoint, and therefore
$$
\sum_{k=0}^{N-1}X_{2k+1}
\le
\int_{\D_0}
|ES_\varepsilon(t)z(x)|\,dx\,dt.
$$
Moreover,
$$
H_{2N+1}\le M\norm{z}_2,
\qquad
\delta_{2N+1}\longrightarrow0.
$$
Therefore,
$$
0\le
e^{-4A/\delta_{2N+1}}H_{2N+1}
\le
M\norm{z}_2e^{-4A/\delta_{2N+1}}
\longrightarrow0.
$$
Letting \(N\to\infty\), we obtain
$$
\norm{S_\varepsilon(t_1)z}_2
\le
C\int_{\D_0}
|ES_\varepsilon(t)z(x)|\,dx\,dt.
$$
Since \(t_1<T\), the uniform semigroup bound and
\(\D_0\subset\D\) yield
$$
\norm{S_\varepsilon(T)z}_2
\le
C\int_\D
|ES_\varepsilon(t)z(x)|\,dx\,dt.
$$
This proves \eqref{eq:normalized-observability}, with \(C\) independent
of \(\varepsilon\).

It remains to return to the original variables. By
\eqref{eq:intertwining},
$$
e^{-\lambda_j(aI_n+N_\varepsilon)t}D_\varepsilon P
=
D_\varepsilon P
e^{-\lambda_j(aI_n+\varepsilon K)t}
$$
for every \(j\) and every real \(\varepsilon\). Using the spectral
decomposition, we obtain
$
S_\varepsilon(t)D_\varepsilon Py_0
=
D_\varepsilon Py_\varepsilon(t;y_0).
$
Taking \(z=D_\varepsilon Py_0\) in
\eqref{eq:normalized-observability} and using
\(ED_\varepsilon P=B\), we get
$$
\norm{D_\varepsilon P
y_\varepsilon(T;y_0)}_2
\le
C\int_\D
|By_\varepsilon(x,t;y_0)|\,dx\,dt.
$$

Finally, integrating the pointwise comparison
\eqref{eq:weight-comparison} over \(\Omega\) gives
\eqref{eq:weighted-observability}. Since
\eqref{eq:intertwining} and \eqref{eq:weight-comparison} also hold at
\(\varepsilon=0\), the proof is valid for
\(|\varepsilon|\le\varepsilon_0\).
\end{proof}

\section{Sharp costs and examples}\label{sec:sharpness}

In this section, we prove Theorem~\ref{thm:sharp}, which gives the exact blow-up rates of the directional and full-state observability costs as $\varepsilon\to0$. We then give several examples to illustrate how these rates depend on the Kalman depths.

\begin{proof}[Proof of Theorem~\ref{thm:sharp}]
We first prove the upper bounds. Assume \eqref{eq:kalman-rank}. Fix $\ell\ne0$ and let
$
r=\rho(\ell).
$
Since $\ell\in\Row_r$, there exist rows
$c_j\in\R^{1\times m}$, $0\le j\le r$, such that
$$
\ell=\sum_{j=0}^r c_jBK^j.
$$

For $0<|\varepsilon|\le1$, we have
\begin{align*}
|\varepsilon|^r\norm{\ell y_\varepsilon(T)}_2
&\le
\sum_{j=0}^r
|c_j|\,|\varepsilon|^{r-j}
\bigl(
|\varepsilon|^j\norm{BK^j y_\varepsilon(T)}_2
\bigr)\\
&\le
\left(\sum_{j=0}^r|c_j|^2\right)^{1/2}
\left(
\sum_{j=0}^q
|\varepsilon|^{2j}
\norm{BK^j y_\varepsilon(T)}_2^2
\right)^{1/2}.
\end{align*}
By Theorem~\ref{thm:weighted},
$$
|\varepsilon|^r\norm{\ell y_\varepsilon(T)}_2
\le
M_\ell
\int_\D
|By_\varepsilon(x,t;y_0)|\,dx\,dt.
$$
This proves the upper bound in \eqref{eq:functional-sharp}.

For the full state, the Kalman rank condition implies that there exists
$c>0$ such that
$$
\sum_{j=0}^q|BK^jv|^2\ge c^2|v|^2,
\qquad v\in\R^n.
$$
Since $0<|\varepsilon|\le1$,
\begin{align}
\left(
\sum_{j=0}^q
|\varepsilon|^{2j}
\norm{BK^j y_\varepsilon(T)}_2^2
\right)^{1/2}
\ge
|\varepsilon|^q
\left(
\sum_{j=0}^q
\norm{BK^j y_\varepsilon(T)}_2^2
\right)^{1/2}\notag
\ge
c|\varepsilon|^q
\norm{y_\varepsilon(T)}_2.
\label{eq:full-upper}
\end{align}
Combining this estimate with Theorem~\ref{thm:weighted} gives the upper bound in
\eqref{eq:full-sharp}.
For each fixed $\varepsilon\ne0$, the Kalman rank condition and the positivity of all the weights in \eqref{eq:weighted-observability} imply that the weighted terminal quantity is equivalent to $\norm{y_\varepsilon(T)}_2$. Hence Theorem~\ref{thm:weighted} yields full-state observability.

We next prove the lower bounds. Fix $\ell\ne0$ and let
$
r=\rho(\ell).
$
Since $\ell\notin\Row_{r-1}$, there exists
$v\in\R^n$ such that
\begin{equation}\label{eq:depth-vector}
BK^jv=0\quad(0\le j<r),
\qquad
\ell v\ne0.
\end{equation}
When $r=0$, the first set of conditions is empty. Moreover,
$BK^rv\ne0$. Indeed, otherwise every row in $\Row_r$ would annihilate
$v$, which contradicts $\ell\in\Row_r$ and $\ell v\ne0$.

Let $\phi_1$ be a first Dirichlet eigenfunction normalized by
$\norm{\phi_1}_2=1$, and take
$
y_0=\phi_1v.
$
Then
\begin{equation}\label{eq:first-mode}
y_\varepsilon(x,t)
=
e^{-a\lambda_1t}\phi_1(x)
e^{-\varepsilon\lambda_1tK}v.
\end{equation}
By Taylor's formula and \eqref{eq:depth-vector},
\begin{equation}\label{eq:depth-taylor}
Be^{-sK}v
=
\frac{(-s)^r}{r!}BK^rv
+
O(|s|^{r+1})
\qquad (s\to0).
\end{equation}
Hence, for $|s|$ sufficiently small,
$$
|Be^{-sK}v|
\le C_r|s|^r,
$$
where $|s|^0=1$ when $r=0$. For sufficiently small
$|\varepsilon|$, this estimate applies to
$s=\varepsilon\lambda_1t$ for every $0\le t\le T$. Therefore,
\begin{equation}\label{eq:test-observation}
\begin{aligned}
\int_\D
|By_\varepsilon(x,t;\phi_1v)|\,dx\,dt
\le
C_r|\varepsilon|^r\lambda_1^r
\int_\D
t^r e^{-a\lambda_1t}|\phi_1(x)|\,dx\,dt
\le
A_r|\varepsilon|^r.
\end{aligned}
\end{equation}
Here $A_r>0$ is independent of $\varepsilon$. At the terminal time,
$
\ell e^{-\varepsilon\lambda_1TK}v
=
\ell v+O(|\varepsilon|).
$
Since $\ell v\ne0$, there exists $\varepsilon_\ell>0$ such that
$$
\left|
\ell e^{-\varepsilon\lambda_1TK}v
\right|
\ge
\frac12|\ell v|,
\qquad
0<|\varepsilon|\le\varepsilon_\ell.
$$
Hence
$$
\norm{\ell y_\varepsilon(T)}_2
\ge
\frac12e^{-a\lambda_1T}|\ell v|.
$$
Combining this estimate with \eqref{eq:test-observation} and the definition of
$C_\ell(\varepsilon;T,\D)$, we obtain
$$
C_\ell(\varepsilon;T,\D)
\ge
\frac{
\frac12e^{-a\lambda_1T}|\ell v|
}{
A_r|\varepsilon|^r
}
\ge
c_\ell|\varepsilon|^{-r}.
$$
This proves the lower bound in \eqref{eq:functional-sharp}.

For the full-state cost, choose a nonzero vector
$v\in\R^n$ annihilated by $\Row_{q-1}$. Such a vector exists because
$\Row_{q-1}\ne\R^{1\times n}$ by the definition of $q$. Taking again
$y_0=\phi_1v$, the same argument gives
$$
\int_\D
|By_\varepsilon(x,t;\phi_1v)|\,dx\,dt
\le
A_q|\varepsilon|^q.
$$
Since $K$ is skew-symmetric,
$e^{-\varepsilon\lambda_1TK}$ is orthogonal, and hence
$
\norm{y_\varepsilon(T)}_2
=
e^{-a\lambda_1T}|v|.
$
Therefore,
$$
C_{\rm full}(\varepsilon;T,\D)
\ge
\frac{
e^{-a\lambda_1T}|v|
}{
A_q|\varepsilon|^q
}
\ge
c|\varepsilon|^{-q}.
$$
This proves the lower bound in \eqref{eq:full-sharp}. The same argument
also applies when $q=0$, since $\Row_{-1}={0}$.

It remains to prove the necessity of the Kalman rank condition. If
\eqref{eq:kalman-rank} fails, there exists $v\ne0$ such that
\[
BK^jv=0,\qquad j=0,\ldots,n-1.
\]
By the Cayley--Hamilton theorem, the same holds for every $j\ge0$.
Hence
\[
Be^{-sK}v=0,
\qquad s\in\R.
\]
Taking $y_0=\phi_1v$ in \eqref{eq:first-mode}, we obtain
\[
By_\varepsilon(x,t)=0
\qquad\text{on }\Omega\times(0,T),
\]
while, since $K$ is skew-symmetric,
\[
\norm{y_\varepsilon(T)}_2
=
e^{-a\lambda_1T}|v|>0.
\]
Therefore
$
C_{\rm full}(\varepsilon;T,\D)=+\infty
$
for every $\varepsilon\ne0$. Combined with the full-state observability obtained above under \eqref{eq:kalman-rank}, this proves the last statement of Theorem~\ref{thm:sharp}.
\end{proof}

We next give a spectral form of the Kalman rank condition that will be useful in the examples below. By the classical Popov--Belevitch--Hautus criterion
\cite{Hautus1969}, \eqref{eq:kalman-rank} is equivalent to
\begin{equation}\label{eq:PBH}
\ker B_\C\cap\ker(K-\mu I_n)=\{0\}
\quad\text{for every }\mu\in\sigma(K),
\end{equation}
where $K$ and $B$ are regarded as acting on complex vectors and
$B_\C$ denotes the complexification of $B$.

Indeed, if $v$ belongs to the intersection in \eqref{eq:PBH}, then
$B_\C K^jv=0$ for every $j\ge0$. Conversely, if
\eqref{eq:kalman-rank} fails, the nonzero complex subspace
\[
V=\bigcap_{j=0}^{n-1}\ker(B_\C K^j)
\]
is invariant under $K$ by the Cayley--Hamilton theorem. Hence the
restriction of $K$ to $V$ has an eigenvector $v\ne0$, which satisfies
$B_\C v=0$. Thus \eqref{eq:PBH} is equivalent to requiring $B_\C$ to
be injective on every complex eigenspace of $K$. In particular, each
eigenspace has dimension at most $m$.

For a scalar observation, the orthogonal normal form of $K$ makes
this condition explicit.

\begin{proposition}\label{prop:scalar-normal-form}
Suppose $m=1$. Let $Q$ be orthogonal and satisfy
\begin{equation}\label{eq:skew-normal-form}
Q^\top KQ=\diag(\omega_1J,\ldots,\omega_sJ,0_k),
\qquad
J=\begin{pmatrix}0&-1\\1&0\end{pmatrix},
\end{equation}
where $\omega_j>0$ and $n=2s+k$. Write
\[
BQ=(\beta_1,\ldots,\beta_s,\beta_0),
\qquad
\beta_j\in\R^{1\times2},
\quad
\beta_0\in\R^{1\times k}.
\]
Then the Kalman rank condition holds if and only if the frequencies
$\omega_1,\ldots,\omega_s$ are pairwise distinct, each $\beta_j$ is
nonzero, and $k\le1$, with $\beta_0\ne0$ when $k=1$.
\end{proposition}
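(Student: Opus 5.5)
The plan is to use the PBH form \eqref{eq:PBH} after an orthogonal change of variables. Replacing $(B,K)$ by $(BQ,Q^\top KQ)$ does not change the rank of the Kalman matrix, because
\[
\begin{pmatrix}B\\BK\\\vdots\\BK^{n-1}\end{pmatrix}Q
=\begin{pmatrix}BQ\\BQ(Q^\top KQ)\\\vdots\\BQ(Q^\top KQ)^{n-1}\end{pmatrix}.
\]
So we may assume $K=\diag(\omega_1J,\ldots,\omega_sJ,0_k)$ and $B=(\beta_1,\ldots,\beta_s,\beta_0)$. The spectrum of $J$ is $\{\pm i\}$, with eigenvectors $e_\pm=(1,\mp i)^\top$. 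Hence $\sigma(K)=\{\pm i\omega_j\}\cup\{0\text{ if }k\ge1\}$, and we can compute each complex eigenspace explicitly. For $\mu=i\omega$ with $\omega>0$, the eigenspace is spanned by the vectors $e_+$ placed in the $j$th block, over those $j$ with $\omega_j=\omega$. The eigenspace for $\mu=0$ is the last block $\C^k$. The eigenspaces for $-i\omega$ are the complex conjugates of those for $i\omega$.

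Since $m=1$, $B_\C$ is a single row. By the remark after \eqref{eq:PBH}, the condition is that $B_\C$ is injective on every eigenspace, so every eigenspace has dimension at most one and $B_\C$ does not vanish on it. First, the eigenspace of $i\omega$ has dimension equal to $\#\{j:\omega_j=\omega\}$, so its having dimension at most one is exactly the statement that the frequencies are pairwise distinct. Second, the kernel of $0$ has dimension $k$, which forces $k\le1$. Third, once the eigenspaces are one-dimensional, injectivity on the $i\omega_j$ eigenspace means $\beta_j e_+=\beta_{j1}-i\beta_{j2}\ne0$, which for real $\beta_j$ is equivalent to $\beta_j\ne0$. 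Conjugation gives the same condition for $-i\omega_j$. If $k=1$, injectivity on the zero eigenspace means $\beta_0\ne0$.

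Both directions then follow. If all the listed conditions hold, every eigenspace is one-dimensional and $B_\C$ is nonzero on its generator, so \eqref{eq:PBH} holds. If some condition fails, we exhibit a nonzero vector $v$ in some eigenspace with $B_\C v=0$:
\begin{itemize}
\item if $\omega_j=\omega_l$ for some $j\ne l$, take a nontrivial combination of the two $e_+$ vectors, which is annihilated since the scalar equation $c_1\beta_je_++c_2\beta_le_+=0$ always has a nonzero solution;
\item if $\beta_j=0$, take $e_+$ in the $j$th block;
\item if $k\ge2$, take any nonzero kernel vector of $\beta_0$ restricted to $\C^k$, which exists because $k\ge2$;
\item if $k=1$ and $\beta_0=0$, take the last basis vector.
\end{itemize}

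The main care point is the bookkeeping of eigenspaces when frequencies coincide: the dimension count must be done in $\C^n$ using the complex eigenvectors $e_\pm$, not over $\R$. Once the eigenspaces are written explicitly, the rest is immediate.
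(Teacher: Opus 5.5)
Your proposal is correct and follows the paper's proof essentially step for step. Both reduce to the normal form by the orthogonal change of variables, identify the complex eigenspaces through the eigenvectors $(1,\mp i)^\top$ of each block, and read the PBH condition as injectivity of the scalar functional $B_\C$ on each eigenspace (dimension at most one and nonvanishing on the generator); your check that the Kalman matrix is multiplied on the right by $Q$, and your explicit annihilated eigenvectors for the converse, only spell out what the paper states more briefly.
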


\begin{proof}
The PBH condition \eqref{eq:PBH} is unchanged under the orthogonal
change of coordinates determined by $Q$. We may therefore work with
$Q^\top KQ$ and $BQ$.

For the block $\omega_jJ$, the complex eigenvalues are
$\pm i\omega_j$, with eigenvectors $(1,-i)^\top$ and $(1,i)^\top$,
respectively. If
$\beta_j=(\beta_{j,1},\beta_{j,2})$, their observed values are
$
\beta_{j,1}-i\beta_{j,2}$
and
$\beta_{j,1}+i\beta_{j,2}$.
Both are nonzero exactly when $\beta_j\ne0$.

If two frequencies are equal, the corresponding complex eigenspace
has dimension at least two. Since the observation is scalar,
$B_\C$ cannot be injective on this eigenspace. Hence the frequencies
must be pairwise distinct. Similarly, the eigenspace associated with
the eigenvalue $0$ has dimension $k$, so \eqref{eq:PBH} requires
$k\le1$; when $k=1$, its single direction must satisfy
$\beta_0\ne0$.

Conversely, under these conditions every complex eigenspace of $K$
is one-dimensional, and $B_\C$ is nonzero on each of them. Thus
\eqref{eq:PBH} holds, and hence so does the Kalman rank condition.
\end{proof}

\begin{corollary}\label{cor:scalar}
Suppose $m=1$ and \eqref{eq:kalman-rank} holds. Then
\[
q=n-1,\qquad
\rho(BK^r)=r,\qquad r=0,\ldots,n-1.
\]
For every $\varepsilon_0>0$, there exists $C>0$ such that
\begin{equation}\label{eq:scalar-weighted}
\left(
\sum_{r=0}^{n-1}
|\varepsilon|^{2r}
\norm{BK^r y_\varepsilon(T)}_2^2
\right)^{1/2}
\le
C\int_\D|By_\varepsilon|\,dx\,dt,
\qquad
|\varepsilon|\le\varepsilon_0.
\end{equation}
Moreover,
\begin{equation}\label{eq:scalar-full-cost}
C_{\rm full}(\varepsilon;T,\D)
\asymp
|\varepsilon|^{-(n-1)}
\qquad
(\varepsilon\to0,\ \varepsilon\ne0).
\end{equation}
\end{corollary}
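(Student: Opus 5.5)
The plan is to reduce everything to the dimension count of the Kalman filtration $\Row_0\subset\Row_1\subset\cdots$ in the scalar case $m=1$, and then read off the estimates from Theorem~\ref{thm:weighted} and Theorem~\ref{thm:sharp}.

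\emph{Step 1: the filtration grows by exactly one dimension per level.} Since $m=1$ and $\rank B=1$, we have $\dim\Row_0=1$. The key claim is that if $\Row_r=\Row_{r-1}$ for some $r$, then the filtration is stationary from level $r-1$ on. Indeed, $\Row_r=\Row_{r-1}+\spanof\{BK^r\}$ and $\Row_{r}K\subset\Row_{r+1}$. If $BK^r\in\Row_{r-1}$, then multiplying by $K$ gives $BK^{r+1}\in\Row_{r-1}K\subset\Row_r=\Row_{r-1}$, and by induction $\Row_j=\Row_{r-1}$ for all $j\ge r-1$. Each step is the span of the previous space plus a single row, so $\dim\Row_r\le\dim\Row_{r-1}+1$. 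Under \eqref{eq:kalman-rank} the dimension must reach $n$, so the filtration cannot stall before level $n-1$. Hence $\dim\Row_r=r+1$ for $0\le r\le n-1$, and therefore $q=n-1$.

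\emph{Step 2: the depths $\rho(BK^r)$.} Since $\dim\Row_r=r+1>\dim\Row_{r-1}$ and $\Row_r=\Row_{r-1}+\spanof\{BK^r\}$, the row $BK^r$ lies in $\Row_r\setminus\Row_{r-1}$. In particular $BK^r\ne0$, and by \eqref{eq:depths} we get $\rho(BK^r)=r$ for $r=0,\ldots,n-1$.

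\emph{Step 3: the estimates.} The weighted inequality \eqref{eq:scalar-weighted} is exactly \eqref{eq:weighted-observability} of Theorem~\ref{thm:weighted} with $q=n-1$ substituted. The full-state cost asymptotics \eqref{eq:scalar-full-cost} follow from \eqref{eq:full-sharp} in Theorem~\ref{thm:sharp}, again using $q=n-1$.

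The only substantive step is the stabilization argument in Step 1, which relies on the invariance $\Row_rK\subset\Row_{r+1}$. Everything else is direct substitution into earlier results. Proposition~\ref{prop:scalar-normal-form} is not needed for the proof.
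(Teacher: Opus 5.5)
Your proposal is correct and follows the paper's proof: establish $\dim\Row_r=r+1$ for $0\le r\le n-1$, read off $q=n-1$ and $\rho(BK^r)=r$, and substitute into Theorems~\ref{thm:weighted} and~\ref{thm:sharp}. The paper obtains $\dim\Row_r=r+1$ more directly. With $m=1$ the Kalman matrix has exactly the $n$ rows $B,BK,\ldots,BK^{n-1}$ and rank $n$, so these rows are linearly independent. Your own bound $\dim\Row_r\le\dim\Row_{r-1}+1$, together with $\dim\Row_0=1$ and $\dim\Row_{n-1}=n$, already forces equality at every level, so your stabilization argument is correct but superfluous.
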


\begin{proof}
Since $m=1$, the Kalman matrix in \eqref{eq:kalman-rank} has exactly
the $n$ rows
$
B,BK,\ldots,BK^{n-1}.
$
The rank condition therefore implies that these rows are linearly
independent. Hence
\[
\dim\Row_r=r+1,\qquad r=0,\ldots,n-1,
\]
which gives
\[
q=n-1,\qquad \rho(BK^r)=r.
\]
Estimate \eqref{eq:scalar-weighted} follows from
Theorem~\ref{thm:weighted}, and
\eqref{eq:scalar-full-cost} follows from
Theorem~\ref{thm:sharp}.
\end{proof}

We next return to the complex heat equation introduced in Section~\ref{sec:introduction}. For each fixed $\varepsilon\ne0$, this observability inequality was
proved in \cite[Theorem~1.1]{FuWangYuZhu2026}. Here we determine its
exact cost as $\varepsilon\to0$, together with the symmetry with respect
to the sign of $\varepsilon$ and an explicit lower bound.

\begin{corollary}\label{cor:complex}
Let $z_\varepsilon$ solve
\begin{equation}\label{eq:complex-heat}
\begin{cases}
\partial_t z_\varepsilon=(a+i\varepsilon)\Delta z_\varepsilon
   &\text{in }\Omega\times(0,T),\\
z_\varepsilon=0
   &\text{on }\partial\Omega\times(0,T),\\
z_\varepsilon(0)=z_0
   &\text{in }\Omega,
\end{cases}
\end{equation}
where $z_0\in L^2(\Omega;\C)$. Let
$C_{\mathrm{Re}}(\varepsilon;T,\D)$ denote the optimal constant in
\[
\norm{z_\varepsilon(T;z_0)}_2
\le
C_{\mathrm{Re}}(\varepsilon;T,\D)
\int_\D
|\mathrm{Re}\,z_\varepsilon(x,t;z_0)|\,dx\,dt,
\]
taken over all $z_0\in L^2(\Omega;\C)$. Then $C_{\mathrm{Re}}(\varepsilon;T,\D)<\infty$ for every
$\varepsilon\ne0$, and
\begin{equation}\label{eq:complex-symmetry}
C_{\mathrm{Re}}(-\varepsilon;T,\D)
=
C_{\mathrm{Re}}(\varepsilon;T,\D).
\end{equation}
Moreover,
\[
C_{\mathrm{Re}}(\varepsilon;T,\D)
\asymp
\frac1{|\varepsilon|}
\qquad
(\varepsilon\to0,\ \varepsilon\ne0).
\]

More precisely, let $\phi_1$ be a first Dirichlet eigenfunction such that
$\phi_1>0$ a.e. in $\Omega$ with
$\norm{\phi_1}_2=1$, and define
\begin{equation}\label{eq:complex-integral}
I_\D
=
\int_\D
t e^{-a\lambda_1t}\phi_1(x)\,dx\,dt.
\end{equation}
Then $0<I_\D<\infty$, and for every $0<\varepsilon_0\le1$ there exists
$C>0$ such that
\begin{equation}\label{eq:complex-cost}
\frac{e^{-a\lambda_1T}}
{\lambda_1I_\D}
\frac1{|\varepsilon|}
\le
C_{\mathrm{Re}}(\varepsilon;T,\D)
\le
\frac{C}{|\varepsilon|},
\qquad
0<|\varepsilon|\le\varepsilon_0.
\end{equation}
\end{corollary}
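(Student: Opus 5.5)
We reduce Corollary~\ref{cor:complex} to the real system \eqref{eq:original-system}. Writing $z_\varepsilon=y_1+iy_2$, the pair $y_\varepsilon=(y_1,y_2)^\top$ solves \eqref{eq:original-system} with $n=2$, $K=J=\begin{pmatrix}0&-1\\1&0\end{pmatrix}$ and $B=(1,0)$. Moreover $|z_\varepsilon(T)|=|y_\varepsilon(T)|$ pointwise and $\mathrm{Re}\,z_\varepsilon=By_\varepsilon$. Hence $C_{\mathrm{Re}}(\varepsilon;T,\D)=C_{\rm full}(\varepsilon;T,\D)$ for this pair. Since $BK=(0,1)$, the Kalman rank condition holds with $q=1$; this is also the case $m=1$, $s=1$, $k=0$ of Proposition~\ref{prop:scalar-normal-form}. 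Theorem~\ref{thm:sharp} then gives finiteness for every $\varepsilon\ne0$. Corollary~\ref{cor:scalar}, or Theorem~\ref{thm:weighted} applied with the given $\varepsilon_0\le1$ together with the upper-bound argument in the proof of Theorem~\ref{thm:sharp}, gives $C_{\mathrm{Re}}\le C|\varepsilon|^{-1}$ for $0<|\varepsilon|\le\varepsilon_0$.

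For the symmetry \eqref{eq:complex-symmetry}, we use complex conjugation. If $z_\varepsilon$ solves \eqref{eq:complex-heat} with data $z_0$, then $\overline{z_\varepsilon}$ solves the equation with $-\varepsilon$ and data $\overline{z_0}$, because $\Delta$ is real and the Dirichlet condition is preserved. Conjugation preserves both $\norm{z(T)}_2$ and $\mathrm{Re}\,z$, and $z_0\mapsto\overline{z_0}$ is a bijection of $L^2(\Omega;\C)$. Therefore the admissible constants for $\varepsilon$ and $-\varepsilon$ coincide, and so do their infima.

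For the explicit lower bound, we refine the introductory computation. First, $0<I_\D<\infty$: the integrand is bounded by $T\,\phi_1(x)$, which is integrable over $\Omega\times(0,T)$, and it is strictly positive a.e. on $\D$ because $\phi_1>0$ a.e. and $t>0$; since $|\D|>0$, $I_\D>0$. Next take $z_0=i\phi_1$. By \eqref{eq:intro-complex-mode}, $\norm{z_\varepsilon(T)}_2=e^{-a\lambda_1T}$ and $|\mathrm{Re}\,z_\varepsilon|=e^{-a\lambda_1t}|\sin(\varepsilon\lambda_1t)|\phi_1\le|\varepsilon|\lambda_1te^{-a\lambda_1t}\phi_1$. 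This uses $\phi_1>0$ to drop the absolute value on $\phi_1$, and it is exactly here that the constant $I_\D$ appears. Integrating over $\D$ gives observation at most $|\varepsilon|\lambda_1I_\D$, hence $e^{-a\lambda_1T}\le C_{\mathrm{Re}}|\varepsilon|\lambda_1I_\D$, which is the left inequality in \eqref{eq:complex-cost} for every $\varepsilon\ne0$. Combined with the upper bound, this yields $C_{\mathrm{Re}}\asymp|\varepsilon|^{-1}$.

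There is no real obstacle here. The only points requiring care are the exact identification $C_{\mathrm{Re}}=C_{\rm full}$, which holds because the Euclidean norm on $\R^2$ matches $|z|$, and the fact that the upper bound holds on the whole range $0<|\varepsilon|\le\varepsilon_0$ rather than only for small $\varepsilon$. The latter follows directly from Theorem~\ref{thm:weighted}, whose constant is uniform on $|\varepsilon|\le\varepsilon_0$, together with $|\varepsilon|\le1$.
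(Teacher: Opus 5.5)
Your proposal is correct and follows the paper's proof essentially step by step. The paper also reduces to the real pair with $K=J$, $B=(1,0)$, gets the upper bound from Theorem~\ref{thm:weighted} using $|\varepsilon|\le1$, proves the symmetry by complex conjugation, and gets the explicit lower bound from $z_0=i\phi_1$ and $|\sin s|\le|s|$; your appeal to the last statement of Theorem~\ref{thm:sharp} for finiteness at every $\varepsilon\ne0$, including $|\varepsilon|>1$, is slightly more explicit than the paper. One trivial slip: $BK=(0,-1)$, not $(0,1)$, which does not affect the rank condition.
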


\begin{proof}
Write
$
z_\varepsilon=y_1+iy_2.
$
Then the corresponding real system has
\[
K=J,\qquad B=(1,0),\qquad BK=(0,-1).
\]
Hence Theorem~\ref{thm:weighted} gives
\begin{equation}\label{eq:complex-weighted}
\left(
\norm{y_1(T)}_2^2
+
|\varepsilon|^2\norm{y_2(T)}_2^2
\right)^{1/2}
\le
C\int_\D |y_1|\,dx\,dt.
\end{equation}
For $0<|\varepsilon|\le1$,
\[
\left(
\norm{y_1(T)}_2^2
+
|\varepsilon|^2\norm{y_2(T)}_2^2
\right)^{1/2}
\ge
|\varepsilon|\norm{z_\varepsilon(T)}_2.
\]
Therefore,
\[
C_{\mathrm{Re}}(\varepsilon;T,\D)
\le
\frac{C}{|\varepsilon|}.
\]

Next, if $z_\varepsilon$ solves \eqref{eq:complex-heat}, then
$\overline{z_\varepsilon}$ solves the same equation with $\varepsilon$
replaced by $-\varepsilon$. Moreover,
\[
\norm{\overline{z_\varepsilon}(T)}_2
=
\norm{z_\varepsilon(T)}_2,
\qquad
|\mathrm{Re}\,\overline{z_\varepsilon}|
=
|\mathrm{Re}\,z_\varepsilon|.
\]
It follows that
\[
C_{\mathrm{Re}}(-\varepsilon;T,\D)
\le
C_{\mathrm{Re}}(\varepsilon;T,\D).
\]
Replacing $\varepsilon$ by $-\varepsilon$ gives the reverse inequality,
and hence \eqref{eq:complex-symmetry}.

For the lower bound, take
$
z_0=i\phi_1.
$
Using \eqref{eq:intro-complex-mode}, we have
\[
\norm{z_\varepsilon(T)}_2
=
e^{-a\lambda_1T},
\]
while
\[
|\mathrm{Re}\,z_\varepsilon(x,t)|
=
e^{-a\lambda_1t}
|\sin(\varepsilon\lambda_1t)|
\phi_1(x).
\]
Since $|\sin s|\le |s|$,
\[
\int_\D
|\mathrm{Re}\,z_\varepsilon|\,dx\,dt
\le
|\varepsilon|\lambda_1
\int_\D
t e^{-a\lambda_1t}\phi_1(x)\,dx\,dt
=
|\varepsilon|\lambda_1 I_\D.
\]
Therefore, by the definition of
$C_{\mathrm{Re}}(\varepsilon;T,\D)$,
\[
C_{\mathrm{Re}}(\varepsilon;T,\D)
\ge
\frac{e^{-a\lambda_1T}}
{|\varepsilon|\lambda_1I_\D}.
\]
This proves \eqref{eq:complex-cost}.

Finally, $I_\D<\infty$ follows from the integrability of $\phi_1$ on
the bounded domain, while $I_\D>0$ follows from
$\phi_1>0$ in $\Omega$, $t>0$ on $(0,T)$, and $|\D|>0$.
\end{proof}

In this example, the rescaling introduced in Section~\ref{sec:modal} takes the explicit form
\begin{equation}\label{eq:complex-scaling}
w_\varepsilon=(y_1,-\varepsilon y_2)^\top,
\qquad
N_\varepsilon=
\begin{pmatrix}
0&1\\
-\varepsilon^2&0
\end{pmatrix},
\qquad
N_0=
\begin{pmatrix}
0&1\\
0&0
\end{pmatrix}.
\end{equation}
Thus $E=(1,0)$ and $EN_0=(0,1)$. For $\varepsilon\ne0$,
\begin{equation}\label{eq:complex-normalized-exponential}
e^{-\lambda(aI_2+N_\varepsilon)t}
=
e^{-a\lambda t}
\begin{pmatrix}
\cos(\varepsilon\lambda t)
&
-\dfrac{\sin(\varepsilon\lambda t)}{\varepsilon}
\\[2mm]
\varepsilon\sin(\varepsilon\lambda t)
&
\cos(\varepsilon\lambda t)
\end{pmatrix},
\end{equation}
and, as $\varepsilon\to0$,
\[
e^{-\lambda(aI_2+N_\varepsilon)t}
\longrightarrow
e^{-a\lambda t}
\begin{pmatrix}
1&-\lambda t\\
0&1
\end{pmatrix}.
\]
The uniform weighted estimate therefore controls $-\varepsilon y_2$ together with $y_1$. Recovering the unweighted component $y_2$ introduces the factor $|\varepsilon|^{-1}$, which agrees with the sharp cost in Corollary~\ref{cor:complex}.

The final two examples illustrate the values of $q$ for one and two observation channels.

\begin{example}\label{ex:three}
Let
\[
K=
\begin{pmatrix}
0&-1&0\\
1&0&-1\\
0&1&0
\end{pmatrix},
\qquad
B=(1,0,0).
\]
Then
\[
B=(1,0,0),\qquad
BK=(0,-1,0),\qquad
BK^2=(-1,0,1).
\]
These three rows are linearly independent. Hence
\[
q=2.
\]
Theorem~\ref{thm:weighted} gives
\begin{equation}\label{eq:three-weighted}
\left(
\norm{y_1(T)}_2^2
+
|\varepsilon|^2\norm{y_2(T)}_2^2
+
|\varepsilon|^4\norm{y_3(T)-y_1(T)}_2^2
\right)^{1/2}
\le
C\int_\D|y_1|\,dx\,dt,
\end{equation}
uniformly for $|\varepsilon|\le\varepsilon_0$, where
$\varepsilon_0>0$ is fixed.

Moreover,
\[
\rho(e_1^\top)=0,\qquad
\rho(e_2^\top)=1,\qquad
\rho(e_3^\top)=2.
\]
Thus Theorem~\ref{thm:sharp} shows that the observability costs of
$y_1(T)$, $y_2(T)$, and $y_3(T)$ have orders $1$, $|\varepsilon|^{-1}$ and $|\varepsilon|^{-2}$ respectively. In particular,
\[
C_{\rm full}(\varepsilon;T,\D)
\asymp|\varepsilon|^{-2}.
\]
\end{example}

\begin{example}\label{ex:four}
Let
\[
J=
\begin{pmatrix}
0&-1\\
1&0
\end{pmatrix},
\qquad
K=\diag(J,2J),
\qquad
B=
\begin{pmatrix}
1&0&0&0\\
0&0&1&0
\end{pmatrix}.
\]
Then
\[
BK=
\begin{pmatrix}
0&-1&0&0\\
0&0&0&-2
\end{pmatrix}.
\]
The rows of $B$ and $BK$ span $\R^{1\times4}$, and hence
\[
q=1.
\]
Theorem~\ref{thm:weighted} gives
\[
\left(
\norm{y_1(T)}_2^2
+
\norm{y_3(T)}_2^2
+
|\varepsilon|^2\norm{y_2(T)}_2^2
+
4|\varepsilon|^2\norm{y_4(T)}_2^2
\right)^{1/2}
\le
C\int_\D
\left|(y_1,y_3)\right|\,dx\,dt,
\]
uniformly for $|\varepsilon|\le\varepsilon_0$, where
$\varepsilon_0>0$ is fixed.

Here
\[
\rho(e_1^\top)=\rho(e_3^\top)=0,
\qquad
\rho(e_2^\top)=\rho(e_4^\top)=1.
\]
Therefore, Theorem~\ref{thm:sharp} gives
\[
C_{\rm full}(\varepsilon;T,\D)
\asymp
|\varepsilon|^{-1}.
\]
Here $n=4$ and $m=2$: the two observation channels directly observe
$y_1$ and $y_3$, while $y_2$ and $y_4$ are reached after one coupling
step. Thus this four-component system has maximal Kalman depth $q=1$, although the state dimension is 4.
\end{example}

\section{Minimum-norm null controls}\label{sec:control}
We next use duality to relate the full-state observability cost to the
minimum $L^\infty$-norm of null controls. Let
$\Q\subset\Omega\times(0,T)$ be measurable, with $|\Q|>0$, and consider
\begin{equation}\label{eq:controlled-system}
\begin{cases}
\partial_t x_\varepsilon=(aI_n-\varepsilon K)\Delta x_\varepsilon
 +\ind_\Q B^\top u&\text{in }\Omega\times(0,T),\\
x_\varepsilon=0&\text{on }\partial\Omega\times(0,T),\\
x_\varepsilon(0)=x_0&\text{in }\Omega.
\end{cases}
\end{equation}
Here $x_0\in H$ and $u\in L^\infty(\Q;\R^m)$, extended by zero
outside $\Q$. For each $x_0\in H$, define
\begin{equation}\label{eq:individual-control-cost}
\mathcal N_\varepsilon(x_0)
=\inf\bigl\{\norm{u}_{L^\infty(\Q)}:
u\in L^\infty(\Q;\R^m),\quad
x_\varepsilon(T;x_0,u)=0\bigr\},
\end{equation}
where the infimum of the empty set is understood as $+\infty$. The
worst-case control cost is
\begin{equation}\label{eq:control-cost-definition}
\controlcost(\varepsilon;T,\Q)
=\sup_{\norm{x_0}_2=1}\mathcal N_\varepsilon(x_0).
\end{equation}
We also introduce the time-reflected set
$
\Q^\sharp=\{(x,s):(x,T-s)\in\Q\}.
$

The sign in \eqref{eq:controlled-system} is chosen so that its adjoint
diffusion matrix is
\[
(aI_n-\varepsilon K)^\top=aI_n+\varepsilon K,
\]
which is exactly the diffusion matrix in \eqref{eq:original-system}.
After reversing time in the adjoint equation, the observation on $\Q$
becomes an observation on $\Q^\sharp$. Thus the observability results
of the previous sections apply directly to the null control problem.

\begin{theorem}\label{thm:control}
Assume \eqref{eq:kalman-rank}. For every $\varepsilon\ne0$ and
$x_0\in H$, there exists a null control
$u_\varepsilon^*\in L^\infty(\Q;\R^m)$ such that
$
\norm{u_\varepsilon^*}_{L^\infty(\Q)}
=\mathcal N_\varepsilon(x_0).
$
Moreover, the worst-case minimum control cost satisfies
\begin{equation}\label{eq:control-identity}
\controlcost(\varepsilon;T,\Q)
=C_{\rm full}(\varepsilon;T,\Q^\sharp).
\end{equation}
Consequently, there exist $c,C,\varepsilon_*>0$ such that
\begin{equation}\label{eq:control-sharp}
c|\varepsilon|^{-q}
\le \controlcost(\varepsilon;T,\Q)
\le C|\varepsilon|^{-q},
\qquad
0<|\varepsilon|\le\varepsilon_*.
\end{equation}
\end{theorem}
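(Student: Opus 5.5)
The proof will follow the standard $L^1$--$L^\infty$ duality between null controllability and observability. I will first fix notation for the adjoint problem. For $\varphi_T\in H$, let $\varphi$ solve $-\partial_t\varphi=(aI_n+\varepsilon K)\Delta\varphi$ on $(0,T)$ with $\varphi(T)=\varphi_T$ and Dirichlet conditions. Setting $\psi(s)=\varphi(T-s)$, the function $\psi$ solves \eqref{eq:original-system} with $\psi(0)=\varphi_T$. Pairing the controlled system \eqref{eq:controlled-system} with $\varphi$ (first on eigenmodes, then by density) gives the identity
\[
\langle x_\varepsilon(T),\varphi_T\rangle-\langle e^{T(aI_n-\varepsilon K)\Delta}x_0,\varphi_T\rangle
=\int_\Q u\cdot B\varphi\,dx\,dt .
\]
Since the free evolution is self-dual, $\langle e^{T(aI_n-\varepsilon K)\Delta}x_0,\varphi_T\rangle=\langle x_0,\psi(T)\rangle$. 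Hence $u$ is a null control if and only if $\int_\Q u\cdot B\varphi=-\langle x_0,\psi(T;\varphi_T)\rangle$ for all $\varphi_T\in H$. The change of time variable turns $\int_\Q|B\varphi|$ into $\int_{\Q^\sharp}|B\psi|$.

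\emph{Upper bound $\controlcost\le C_{\rm full}(\varepsilon;T,\Q^\sharp)$ and existence.} Define on the subspace $X=\{B\psi(\cdot;\varphi_T)|_{\Q^\sharp}\}\subset L^1(\Q^\sharp;\R^m)$ the functional $B\psi\mapsto-\langle x_0,\psi(T)\rangle$. By Theorem~\ref{thm:sharp}, full-state observability holds for $\varepsilon\neq0$, so this functional is well defined (a zero observation forces $\psi(T)=0$) and has norm at most $C_{\rm full}\norm{x_0}_2$. Hahn--Banach extends it to all of $L^1$ with the same norm, and the Riesz representation $(L^1)^*=L^\infty$ then yields $u$ with $\norm{u}_\infty\le C_{\rm full}\norm{x_0}_2$; one must track the Euclidean dual norm on $\R^m$ consistently. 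Existence of a minimizer follows by taking a minimizing sequence of null controls, bounded in $L^\infty$, extracting a weak-$*$ limit, and noting that the linear constraint passes to the limit. Alternatively, Hahn--Banach already gives the minimal norm, because any null control has norm at least the functional norm.

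\emph{Lower bound.} If $u$ is a null control for $x_0$, then $|\langle x_0,\psi(T)\rangle|\le\norm{u}_\infty\int_{\Q^\sharp}|B\psi|$. Taking the supremum over $\norm{x_0}_2=1$ gives
\[
\norm{\psi(T)}_2\le\controlcost\int_{\Q^\sharp}|B\psi|
\]
for every $\varphi_T$. Since $\varphi_T\mapsto\psi(T)$ is just the semigroup applied to the initial state, ranging over $\varphi_T\in H$ covers all initial data $y_0$, so $C_{\rm full}\le\controlcost$. I also need $\sup_{x_0}\mathcal N_\varepsilon(x_0)$ to exchange correctly with these suprema. This gives \eqref{eq:control-identity}.

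Finally, $\Q^\sharp$ is measurable with $|\Q^\sharp|=|\Q|>0$, so \eqref{eq:full-sharp} applied with $\D=\Q^\sharp$ gives \eqref{eq:control-sharp}. I expect the main obstacle to be the rigorous justification of the duality identity for merely $L^2$ data and $L^\infty$ controls; I would handle it through the eigenfunction expansion and Duhamel's formula. Care is also needed with the norm convention, namely using the Euclidean norm pointwise in both $L^1$ and $L^\infty$ so that the constants match exactly.
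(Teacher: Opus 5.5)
Your proposal is correct and follows essentially the paper's route: the pairing identity with the time-reflected adjoint, a Hahn--Banach extension of $B\psi\mapsto-\langle x_0,\psi(T)\rangle$ from the observation range in $L^1$ followed by $L^1$--$L^\infty$ duality, which gives a null control whose norm equals the functional norm and is therefore minimal, and testing an arbitrary null control against adjoint solutions for the reverse inequality. The only difference is cosmetic. You obtain the identity for the worst-case cost as two separate inequalities, whereas the paper first derives the exact supremum formula for $\mathcal N_\varepsilon(x_0)$ and then exchanges suprema; your version makes the exchange you were worried about unnecessary.
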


\begin{proof}
All spaces and pairings are real. We use $L^1$--$L^\infty$ duality
with preservation of norms; see \cite{TucsnakWeiss2009} for the
general observability and control framework.

Let $T_\varepsilon(t)\phi=y_\varepsilon(t;\phi)$ denote the
homogeneous semigroup associated with \eqref{eq:original-system}.
Since $K^\top=-K$, its spectral representation gives
$
T_{-\varepsilon}(t)^*=T_\varepsilon(t).
$
Moreover, both semigroups are contractions on $H$, since
$e^{-\varepsilon\lambda_jKt}$ is orthogonal. Define
\begin{equation}\label{eq:control-map}
W_\varepsilon u
=\int_0^T T_{-\varepsilon}(T-t)
 \bigl(\ind_\Q(\cdot,t)B^\top u(\cdot,t)\bigr)\,dt.
\end{equation}
Writing $\Q_t=\{x:(x,t)\in\Q\}$, we obtain
\[
\norm{W_\varepsilon u}_2
\le \norm{B}\norm{u}_{L^\infty(\Q)}
       \int_0^T|\Q_t|^{1/2}\,dt
\le \norm{B}\sqrt{T|\Q|}\norm{u}_{L^\infty(\Q)}.
\]
Hence
\begin{equation}\label{eq:null-control-condition}
x_\varepsilon(T;x_0,u)
=T_{-\varepsilon}(T)x_0+W_\varepsilon u.
\end{equation}

For $\phi\in H$, set
\[
(\mathcal O_\varepsilon\phi)(x,t)
=BT_\varepsilon(T-t)\phi(x),
\qquad (x,t)\in\Q.
\]
Then
\[
\norm{\mathcal O_\varepsilon\phi}_{L^1(\Q)}
\le \norm{B}\sqrt{T|\Q|}\norm{\phi}_2.
\]
Using $T_{-\varepsilon}(t)^*=T_\varepsilon(t)$ and Fubini's theorem,
we have
\begin{equation}\label{eq:control-pairing}
\langle W_\varepsilon u,\phi\rangle_H
=\int_\Q u\cdot\mathcal O_\varepsilon\phi\,dx\,dt.
\end{equation}
After the change of variables $s=T-t$,
\begin{equation}\label{eq:reflected-observation}
\norm{\mathcal O_\varepsilon\phi}_{L^1(\Q)}
=\int_{\Q^\sharp}|BT_\varepsilon(s)\phi(x)|\,dx\,ds.
\end{equation}
Therefore, Theorem~\ref{thm:sharp} gives
\begin{equation}\label{eq:dual-observability}
\norm{T_\varepsilon(T)\phi}_2
\le C_{\rm full}(\varepsilon;T,\Q^\sharp)
       \norm{\mathcal O_\varepsilon\phi}_{L^1(\Q)}.
\end{equation}
Finally, $T_\varepsilon(T)$ is injective, since every matrix
exponential in its spectral representation is invertible. If
$\mathcal O_\varepsilon\phi=0$, then
\eqref{eq:dual-observability} gives $T_\varepsilon(T)\phi=0$.
The injectivity of $T_\varepsilon(T)$ therefore implies $\phi=0$.
Hence $\mathcal O_\varepsilon\phi\ne0$ for every $\phi\ne0$.

Fix $x_0\in H$. On the subspace
$\mathcal O_\varepsilon(H)\subset L^1(\Q;\R^m)$, define
\[
\mathcal L_{x_0}(\mathcal O_\varepsilon\phi)
=-\langle x_0,T_\varepsilon(T)\phi\rangle_H.
\]
Since $\mathcal O_\varepsilon$ is injective, this functional is
well defined. Moreover, \eqref{eq:dual-observability} gives
\[
|\mathcal L_{x_0}(\mathcal O_\varepsilon\phi)|
\le
\norm{x_0}_2
C_{\rm full}(\varepsilon;T,\Q^\sharp)
\norm{\mathcal O_\varepsilon\phi}_{L^1(\Q)},
\]
so $\mathcal L_{x_0}$ is bounded. Its norm is
\begin{equation}\label{eq:functional-norm}
\norm{\mathcal L_{x_0}}
=\sup_{\phi\ne0}
\frac{|\langle x_0,T_\varepsilon(T)\phi\rangle_H|}
     {\norm{\mathcal O_\varepsilon\phi}_{L^1(\Q)}}.
\end{equation}

By the Hahn--Banach theorem, $\mathcal L_{x_0}$ extends to a bounded
functional on $L^1(\Q;\R^m)$ with the same norm. The
$L^1$--$L^\infty$ duality then gives
$u_{x_0}\in L^\infty(\Q;\R^m)$ such that
\[
\int_\Q u_{x_0}\cdot\mathcal O_\varepsilon\phi\,dx\,dt
=-\langle x_0,T_\varepsilon(T)\phi\rangle_H,
\qquad
\norm{u_{x_0}}_{L^\infty(\Q)}
=\norm{\mathcal L_{x_0}}.
\]
Together with \eqref{eq:control-pairing}, this identity yields
\[
W_\varepsilon u_{x_0}
=-T_{-\varepsilon}(T)x_0.
\]
Hence \eqref{eq:null-control-condition} gives
$x_\varepsilon(T;x_0,u_{x_0})=0$.

Now let $u$ be any null control for $x_0$. By
\eqref{eq:null-control-condition} and \eqref{eq:control-pairing},
\[
|\langle x_0,T_\varepsilon(T)\phi\rangle_H|
\le
\norm{u}_{L^\infty(\Q)}
\norm{\mathcal O_\varepsilon\phi}_{L^1(\Q)}
\]
for every $\phi\in H$. Taking the supremum over $\phi\ne0$ and using
\eqref{eq:functional-norm}, we obtain
\[
\norm{\mathcal L_{x_0}}
\le\norm{u}_{L^\infty(\Q)}.
\]
Since the control $u_{x_0}$ attains this bound, it is a minimum-norm
null control. Therefore,
\begin{equation}\label{eq:minimum-control-formula}
\mathcal N_\varepsilon(x_0)
=\sup_{\phi\ne0}
\frac{|\langle x_0,T_\varepsilon(T)\phi\rangle_H|}
     {\norm{\mathcal O_\varepsilon\phi}_{L^1(\Q)}}.
\end{equation}

Finally, taking the supremum in
\eqref{eq:minimum-control-formula} over $\norm{x_0}_2=1$ and using
\eqref{eq:reflected-observation}, we obtain
\begin{align*}
\controlcost(\varepsilon;T,\Q)
&=\sup_{\phi\ne0}
\frac{\displaystyle\sup_{\norm{x_0}_2=1}
|\langle x_0,T_\varepsilon(T)\phi\rangle_H|}
{\norm{\mathcal O_\varepsilon\phi}_{L^1(\Q)}}\\
&=\sup_{\phi\ne0}
\frac{\norm{T_\varepsilon(T)\phi}_2}
{\displaystyle\int_{\Q^\sharp}
|BT_\varepsilon(s)\phi(x)|\,dx\,ds}\\
&=C_{\rm full}(\varepsilon;T,\Q^\sharp).
\end{align*}
The last equality follows from the definition of the optimal
full-state observability constant. Hence
\eqref{eq:control-identity} holds. Since
$|\Q^\sharp|=|\Q|>0$, \eqref{eq:control-sharp} follows from
Theorem~\ref{thm:sharp}.
\end{proof}

%

\end{document}